\theoremstyle{definition}
\newtheorem{defi}[equation]{Definition}
\newtheorem{thm}[equation]{Theorem}
\newtheorem*{thmrp}{\bf Existence of rational points over finite fields}
\newtheorem{lem}[equation]{Lemma}
\newtheorem{cor}[equation]{Corollary}
\newtheorem{rmk}[equation]{Remark}
\newtheorem{change}[equation]{Change-of-variable formula}
\numberwithin{equation}{section}
\newcommand{\la}{\langle}
\newcommand{\ra}{\rangle}
\newcommand{\ul}{\underline}
\title{Pseudonorms and $p$-adic birational Torelli theorem}
\author{Chen-Yu Chi}
\address{Chen-Yu Chi: Department of Mathematics,
National Taiwan University, Taipei 10617, Taiwan}
\email{geometrychi@yahoo.com.tw, chi@math.ntu.edu.tw}
\begin{document}
\fontsize{12pt}{20pt}\selectfont
\maketitle
\setlength{\baselineskip}{18pt}

\begin{abstract} A $p$-adic analogue of the pseudonorm version of the birational Torelli type theorem is obtained via a comparison theorem of image closures. Among other results obtained, we have a criterion for existence of rational points of canonically polarized surfaces over finite fields. 
\end{abstract}

\maketitle

\section{\bf Introduction}

In \cite{chi-yau} and \cite{chi}, Yau and the author have initiated a study of birational equivalence via the pseudonorm functions on pluricanonical spaces of complex projective manifolds and obtained theorems of Torelli type for birational equivalence. In the current paper, we prove a $p$-adic version of the aforementioned result. In order to state the main result we first set up some notation and terminology. Let $K$ be a completion of an algebraic number field at a prime divisor, $R$ its ring of integers, and $\mathbf F_q$ its residue field. Below we call such $K$ a $p$-adic field if $q$ is a power of some prime number $p$. $|\cdot|_K$ will be a fixed non-archimedean absolute value on $K$ compatible with the valuation on $R$. 

We consider an integral $R$-scheme $\xymatrix{X\ar[r]^-\pi&\mathrm{Spec}\,R}$ which is proper and smooth over $\mathrm{Spec}\, R$. Note that $X(K)$, the set of all $K$-points of $X$, is a $K$-analytic manifold\footnote{See \cite{igusa} 2.4 for the definitions and basic properties. $K$-analytic manifolds are assumed to be 2nd countable.}.   We let 
$$
\Gamma_{\mathrm{an}}\big(X(K),K_{X(K)}^{\otimes m}\big)=\left\{\text{all }K\text{-analytic section of }K_{X(K)}^{\otimes m}\text{ on }X(K)\right\},
$$
where $K_{X(K)}=\wedge^{\dim\, X}T^*X(K)$ is the canonical bundle of $X(K)$, and let 
$$
\Gamma(X,\omega_{X/R}^{\otimes m})=H^0(X,\omega_{X/R}^{\otimes m}),
$$
where $\omega_{X/R}=\omega_X\otimes \pi^*\omega_{\mathrm{Spec}\, R}$. We have a natural homomorphism
$$
\xymatrix{
\Gamma(X,\omega_{X/R}^{\otimes m})\ar[r]&\Gamma_{\mathrm{an}}\big(X(K),K_{X(K)}^{\otimes m}\big)
}
$$
by viewing scheme-theoretic sections on $X$ as $K$-analytic sections on $X(K)$. Since all $K$-analytic sections are Borel measurable sections, we have by Definition \ref{pseudonorm} the following maps
$$
\xymatrix{
\Gamma(X,\omega_{X/R}^{\otimes m})\ar[r]&\Gamma_{\mathrm{an}}\big(X(K),K_{X(K)}^{\otimes m}\big)\ar[r]^-{\la\!\la\cdot\ra\!\ra_m}&[0,\infty).
}
$$
Finally, if $V$ is a nonzero $R$-submodule of $\Gamma(X,\omega_{X/R}^{\otimes m})$, we have the restricted $m$-th pluricanonical $R$-rational map\footnote{For $S$-schemes $X$ and $Y$, a rational map $\xymatrix{X\ar[r]^-{f}&Y}$ is said to be $S$-rational if there exists a dense open set $U$ of $X$ which intersects every fibre of $X$ over $S$ such that $\xymatrix{U\ar[r]^{f|_U}&Y}$ is an $S$-morphism; $f$ is $S$-birational if $U$ can be further chosen so that $f(U)$ is open in $Y$ and $f$ maps $U$ onto $f(U)$ isomorphically.}
$$
\xymatrix@C40pt{X\ar@{-->}[r]^-{\varphi_{|V|}}&\mathbf PV^*:=\mathrm{Proj}\, \bigoplus_{l=0}^\infty\mathrm{Sym}_R^l V.}
$$

Now consider two integral schemes $X$ and $Y$ which are proper and smooth over $\mathrm{Spec}\, R$. The following is the first main result of the current paper.

\begin{thm}\label{image} For an $m\in\mathbf N$ and for an isomorphism 
$$ 
\xymatrix{V_Y\ar[r]^-{T} & V_X}
$$
between $R$-submodules $V_X\subseteq\Gamma(X,\omega_{X/R}^{\otimes m})$ and $V_Y\subseteq \Gamma(Y,\omega_{Y/R}^{\otimes m})$ which preserves the pseudonorm $\la\!\la\cdot\ra\!\ra_m$, if $X(K)\neq\emptyset\neq Y(K)$, the induced isomorphism 
$$
\xymatrix{\mathbf P V_X^*\ar[r]^-{\mathbf PT^*}& \mathbf P V_Y^*}
$$
of $R$-schemes identifies the image closures of the $R$-rational maps $\varphi_{|V_X|}$ and $\varphi_{|V_Y|}$.
\end{thm}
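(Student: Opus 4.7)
The plan is to reduce the $R$-scheme statement to a $K$-analytic statement on $K$-points, and then exploit the pseudonorm data pointwise. \emph{First}, since both $\mathbf{P}V_X^*$ and $\mathbf{P}V_Y^*$ are flat and proper over $\mathrm{Spec}\,R$, the image closure of an $R$-rational map is the scheme-theoretic (hence flat) closure of its generic-fibre image; so it is enough to identify the Zariski closures of $\varphi_{|V_X|}(X_K)$ and $\varphi_{|V_Y|}(Y_K)$ over $K$ under the induced $\mathbf{P}T_K^*$. \emph{Second}, smoothness of $X$ over $R$ together with $X(K)\ne\emptyset$ makes $X(K)$ a non-empty $K$-analytic manifold of (local) dimension $\dim X_K$; a $p$-adic Haar-measure argument (proper algebraic subvarieties are locally lower-dimensional and hence of measure zero) forces $X(K)$ to be Zariski dense in $X_K$, and similarly for $Y$. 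It therefore suffices to prove the set-equality $\mathbf{P}T_K^*\big(\varphi_{|V_X|}(X(K))\big)=\varphi_{|V_Y|}(Y(K))$ of analytic subsets.

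The core step is to show that the pseudonorm $\la\!\la\cdot\ra\!\ra_m$ on $V_X\otimes K$ alone determines, for each $x\in X(K)$, the class of the evaluation functional $\mathrm{ev}_x\colon V_X\otimes K\to K_{X(K),x}^{\otimes m}$ as a point of $\mathbf{P}V_X^*(K)$, which is precisely $\varphi_{|V_X|}(x)$. In a $K$-analytic chart around $x$ with coordinates $z_1,\dots,z_d$, any $s\in V_X\otimes K$ writes as $s=f(z)(dz_1\wedge\cdots\wedge dz_d)^{\otimes m}$, and the pseudonorm contribution from a shrinking polydisc $B(x,r)$ is asymptotically governed by $|f(x)|^{2/m}=|s(x)|^{2/m}$ times a universal local measure factor. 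Comparing these local contributions as $s$ varies over $V_X\otimes K$ recovers the class $[\mathrm{ev}_x]$ from the pseudonorm intrinsically (i.e.\ independently of any trivialisation of $K_{X(K),x}^{\otimes m}$). Pseudonorm-preservation of $T$ then forces $\mathbf{P}T^*([\mathrm{ev}_x])$ to be $[\mathrm{ev}_y]$ for some $y\in Y(K)$; running the same argument for $T^{-1}$ yields the desired set-equality, and combining with the reductions above completes the proof.

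The main obstacle is making the local asymptotic step rigorous in the non-archimedean setting. In the archimedean case one uses smoothness of the $L^{2/m}$ functional under infinitesimal perturbations, but over a $p$-adic field the absolute value $|\cdot|_K$ is discrete-valued and the topology is totally disconnected, so there is no meaningful differentiation of the pseudonorm. Instead one must work with ratios of pseudonorm integrals on nested polydiscs and show that these ratios stabilise to extract $[\mathrm{ev}_x]$ in a coordinate-free way; care is needed because the extraction must be compatible with arbitrary $K$-analytic changes of coordinates around $x$. The key tools here will be Igusa's $p$-adic integration theory (see the reference in the paper) together with the change-of-variable formula for $K$-analytic manifolds that is set up as its own named result in the paper.
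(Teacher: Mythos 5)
Your two reduction steps (descend to the generic fibre, then use smoothness together with $p$-adic measure to get Zariski density of $K$-points) are broadly sound and close in spirit to what the paper does, though the paper actually reduces to sets of $\overline{K}$-points and then descends via an algebraic argument with ideals (Claim~1 of Section~\ref{proof of image}). The genuine gap is the ``core step.'' You assert that the pseudonorm $\la\!\la\cdot\ra\!\ra_m$ on $V_X\otimes K$ determines, for each individual $x\in X(K)$, the point $\varphi_{|V_X|}(x)=[\mathrm{ev}_x]$ by examining ``the pseudonorm contribution from a shrinking polydisc $B(x,r)$.'' But the pseudonorm is a single global integral $\int_{X(K)}\la s\ra^{1/m}$; it has no ``local contributions'' available as data — you only know the functional $s\mapsto\la\!\la s\ra\!\ra_m$, not its restriction to small balls. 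In fact the pseudonorm is manifestly unchanged under any measure-preserving $K$-analytic automorphism of $X(K)$, so it cannot recover the assignment $x\mapsto[\mathrm{ev}_x]$ pointwise. And even granting a determination of the image set on the $X$ side, the subsequent sentence ``pseudonorm-preservation of $T$ then forces $\mathbf P T^*([\mathrm{ev}_x])$ to be $[\mathrm{ev}_y]$ for some $y\in Y(K)$'' is precisely the conclusion of the theorem, asserted without argument.

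What the pseudonorm data actually controls is the pushforward of the local measures under the pluricanonical maps — a global, measure-theoretic statement, not a pointwise one. The missing ingredient is exactly the $p$-adic analogue of Rudin's equimeasurability theorem (Theorem~\ref{equim} in the paper): fixing a basis $\eta_0,\dots,\eta_N$, the isometry hypothesis gives
$$
\int_{X_0(K)} \Big|1+\sum\nolimits_i v_i f_i\Big|_K^{1/m}\,\la T\eta_0\ra^{1/m}
=
\int_{Y_0(K)} \Big|1+\sum\nolimits_i v_i g_i\Big|_K^{1/m}\,\la\eta_0\ra^{1/m}
$$
for all $v\in K^N$, and Theorem~\ref{equim} upgrades this family of integral identities to the equality $F_*\mu = G_*\nu$ of pushforward measures. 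Equality of supports then forces the image closures to agree, with the density theorem (Theorem~\ref{dens}) and Chevalley's theorem supplying the nonempty open sets of positive measure used in the final contradiction. Your proposal bypasses this theorem and hence has no mechanism to pass from integral identities to geometric equality. Incidentally, the exponent $2/m$ in your asymptotic heuristic is the one appropriate to $\mathbf C$ (where $|\cdot|^2=z\bar z$); in the $p$-adic setting the relevant power is $1/m$, which is a sign the local-asymptotic picture you have in mind is the archimedean one and does not transfer.
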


We will see later (Proposition \ref{bir inv}) that for every $m\in\mathbf N$ the pseudonormed space $\big(\Gamma(X,\omega_{X/R}^{\otimes m}), \la\!\la\cdot\ra\!\ra_m\big)$ is an $R$-birational invariant of $X$ in the sense that associated to every $R$-birational map $\xymatrix{X\ar@{-->}[r]^-f&Y}$ is an $R$-linear isometry
$$
\xymatrix{\big(\Gamma(Y,\omega_{Y/R}^{\otimes m}), \la\!\la\cdot\ra\!\ra_m\big)\ar[r]^-{f^*}&\big(\Gamma(X,\omega_{X/R}^{\otimes m}), \la\!\la\cdot\ra\!\ra_m\big).}
$$  
Note that if the base change $X_K$ is of general type, then there exists some $m_X\in\mathbf N$ such that $\varphi_{X,m}:=\varphi_{\left|\Gamma(X,\omega_{X/R}^{\otimes m})\right|}$ is an $R$-birational map from $X$ onto the image closure of $\varphi_{X,m}$ for every $m\geqslant m_X$.\footnote{Here one may extend a birational map over $K$ to an $R$-birational map by a classical result of Matsusaka and Mumford (\cite{mm} Theorem 1). See also \cite{kovacs} Theorem 5.13 for a rephrasing close to the current situation.} Consequently, Theorem \ref{image} has the following corollary.
\begin{cor}[{{\bf $p$-adic birational Torelli theorem}}]\label{bir tor} Suppose that $X_K$ and $Y_K$ are of general type over $K$. If $X(K)\neq\emptyset\neq Y(K)$, if for some $m\in\mathbf N$ there is an $R$-linear isometry
$$
\xymatrix{\big(\Gamma(Y,\omega_{Y/R}^{\otimes m}), \la\!\la\cdot\ra\!\ra_m\big)\ar[r]^-{T}&\big(\Gamma(X,\omega_{X/R}^{\otimes m}), \la\!\la\cdot\ra\!\ra_m\big)},
$$  
and if both $\varphi_{X,m}$ and $\varphi_{Y,m}$ maps birationally onto their image closures, then there exists an $R$-birational map $\xymatrix{X\ar[r]^-f&Y}$ and some $c\in R^\times$ such that $T=cf^*$.  
\end{cor}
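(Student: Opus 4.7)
The plan is to extract the desired birational map $f$ from the projective isomorphism $\mathbf{P}T^*$, invoke Theorem \ref{image} and Proposition \ref{bir inv} to relate $T$ and $f^*$, and finally verify that their ratio is a unit of $R$.

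Setting $V_X = \Gamma(X,\omega_{X/R}^{\otimes m})$ and $V_Y = \Gamma(Y,\omega_{Y/R}^{\otimes m})$, I would first apply Theorem \ref{image} to the isometry $T$; this yields that $\mathbf{P}T^*: \mathbf{P}V_X^* \to \mathbf{P}V_Y^*$ identifies the image closures of $\varphi_{X,m}$ and $\varphi_{Y,m}$. Under the hypothesis that both pluricanonical maps are $R$-birational onto their image closures, the composition
$$
f := \varphi_{Y,m}^{-1} \circ \mathbf{P}T^* \circ \varphi_{X,m}
$$
is then a well-defined $R$-birational map $X \dashrightarrow Y$.

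Next, Proposition \ref{bir inv} applied to this $f$ produces an $R$-linear isometry $f^*: V_Y \to V_X$. Naturality of the pluricanonical construction provides the intertwining identity $\varphi_{Y,m} \circ f = \mathbf{P}(f^*)^* \circ \varphi_{X,m}$, which by the construction of $f$ also equals $\mathbf{P}T^* \circ \varphi_{X,m}$. Hence $\mathbf{P}T^*$ and $\mathbf{P}(f^*)^*$ agree on the image closure of $\varphi_{X,m}$. Since $V_X$ is the full $R$-module of $m$-canonical sections, its elements are $R$-linearly independent, so the image closure is non-degenerate in $\mathbf{P}V_X^*$ (any hyperplane containing it would correspond to a nonzero section vanishing identically on the integral scheme $X$); the two projective isomorphisms must therefore agree globally. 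Linearizing, one obtains $T = c\cdot f^*$ for some $c \in K^\times$.

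Finally, since both $T$ and $f^*$ are $R$-module isomorphisms $V_Y \to V_X$, the scalar $c$ must satisfy $c\cdot V_X = V_X$, and since $V_X$ is a nonzero finitely generated free $R$-module this forces $c \in R^\times$. The most delicate point in this approach is the non-degeneracy argument needed to upgrade the equality of projective morphisms from the image closure to all of $\mathbf{P}V_X^*$; the rest is formal composition of Theorem \ref{image} with Proposition \ref{bir inv}, plus the integrality bookkeeping required at the end to promote the scalar from $K^\times$ to $R^\times$.
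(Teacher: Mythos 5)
Your argument is correct and is exactly the route the paper intends but does not spell out: the paper derives Corollary \ref{bir tor} from Theorem \ref{image} and Proposition \ref{bir inv} in a single sentence (``Consequently, Theorem \ref{image} has the following corollary''), and your proof fills in precisely the missing steps --- composing $\varphi_{Y,m}^{-1}\circ \mathbf{P}T^*\circ\varphi_{X,m}$ to get $f$, using naturality of the pluricanonical map and non-degeneracy of the image closure (a projective automorphism fixing an irreducible non-degenerate subvariety pointwise is a scalar) to force $T=cf^*$ over $K$, and then using that $T$ and $f^*$ are both $R$-module isomorphisms of the finite free $R$-module $V_X$ to get $c\in R^\times$.
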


Our next main result is related to the condition $X(K)\neq\emptyset$. It is well known that, if $X(\mathbf F_q)\neq\emptyset$, then $X(K)=X(R)\neq\emptyset$ by Hensel's lemma and the valuative criterion. For a fixed $X$ and a fixed prime number $p$, clearly $X(\mathbf F_q)\neq\emptyset$ if $q=p^r$ is sufficiently large. It is then natural to ask how large $q$ would be sufficient to imply $X(\mathbf F_q)\neq\emptyset$. Let $n=\dim\, X_{\mathbf F_q}$. When $X_{\mathbf F_q}$ is smooth, in discussions one usually assumes that $X_{\mathbf F_q}$ is geometrically connected, for otherwise $X(\mathbf F_q)=\emptyset$. According to the solution to Weil's conjecture \cite{deligne}, an estimate of $\# X(\mathbf F_q)$ can be obtained in terms of the Betti numbers. To obtain explicit results following this line, one has to have information of the Betti numbers, and this is mostly achievable for completely intersections. For example, if $X_{\mathbf F_q}$ is a complete intersection in $\mathbf P^{n+r}_{\mathbf F_q}$ of multi-degree $(d_1,\dots,d_r)$, Deligne \cite{deligne} has obtained the estimate
$$
\left|\#  X(\mathbf F_q)-q^n-q^{n-1}-\cdots-1\right|\leqslant b_n'(n+r,d_1,\dots,d_r)q^{\frac{n}{2}}
$$
where $b_n'(n+r,d_1,\dots,d_r)q^{\frac{n}{2}}$ is the $n$-th primitive Betti number of any nonsingular complete intersection of $\mathbf P^{n+r}$ of dimension $n$ and multi-degree $(d_1,\dots,d_r)$, which can be written down explicitly in terms of $n$, $r$, and $(d_1,\dots,d_r)$.  Among many explicit results along this line, it is known for ecample that (cf. \cite{cmp} Theorem 1.2) 
$$
X(\mathbf F_q)\neq\emptyset\, \text{ if }\, 
q> 2\Big(2+\sum\nolimits_{j=1}^r(d_j - 1) \Big)^2 (d_1\cdots d_r)^2.
$$
A natural question is then to know what one can say about situations which involve possibly non-complete-intersections. Besides, we look for results on the nontriviality of $X(\mathbf F_q)$ involving intersection numbers of divisors, instead of involving Betti numbers. We have the following result on existence of rational points over $\mathbf F_q$ involving the intersection numbers of the canonical divisor and a very ample divisor.
\begin{thm} \label{nontrivial} Let $W$ be a complete smooth geometrically connected scheme over $\mathbf F_q$ of dimension $n$. Suppose that $H$ is a very ample divisor. Then 
$$
W(\mathbf F_q)\neq\emptyset\ \text{ if }\ q>\max\left\{H^{\bullet n}(H^{\bullet n}-1)^n,\, 
\big(K_W\bullet H^{\bullet (n-1)} + (n-1) H^{\bullet n}+2\big)^2
\right\},
$$
where $\bullet$ denotes the numerical intersection product of divisors.
\end{thm}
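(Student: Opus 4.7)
The plan is to cut $W$ down to a smooth geometrically irreducible curve by $n-1$ hyperplane sections, and then invoke the Hasse-Weil bound for curves. Using the very ample $H$, I would embed $W \hookrightarrow \mathbf P^N$ where $N+1 = h^0(W, \mathcal O_W(H))$; in this embedding $W$ has degree $d := H^{\bullet n}$.

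The core of the argument is an effective Bertini step over $\mathbf F_q$: one produces hyperplanes $H_1, \ldots, H_{n-1}$ defined over $\mathbf F_q$ such that $W_i := W \cap H_1 \cap \cdots \cap H_i$ is smooth of pure dimension $n-i$ for each $i$. At the $i$-th stage, the hyperplanes $H$ for which $W_{i-1} \cap H$ is singular form the dual variety of the smooth embedded $W_{i-1}$, a proper subvariety of the dual projective space whose degree is controlled (via the classical bound generalizing the formula $d(d-1)^n$ in the hypersurface case) by $d(d-1)^{\dim W_{i-1}} \leq d(d-1)^n$. The standard point estimate $\#Z(\mathbf F_q) \leq \delta \cdot \frac{q^N - 1}{q-1}$ for a hypersurface $Z$ of degree $\delta$ in $\mathbf P^N$, compared with the total $\frac{q^{N+1}-1}{q-1}$ of $\mathbf F_q$-rational hyperplanes, guarantees an $\mathbf F_q$-rational $H_i$ avoiding the bad locus as soon as $q > d(d-1)^n$, precisely the first clause of the claimed bound.

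Next, $C := W_{n-1}$ is geometrically connected: over $\bar{\mathbf F}_q$, iterated application of the Lefschetz (or Fulton--Hansen) connectedness theorem for hyperplane sections of smooth projective varieties of dimension $\geq 1$ keeps $C_{\bar{\mathbf F}_q}$ connected, and smoothness upgrades this to geometric irreducibility. Adjunction applied $n-1$ times yields $K_C = (K_W + (n-1)H)|_C$, hence $2g_C - 2 = K_W \bullet H^{\bullet(n-1)} + (n-1) H^{\bullet n}$.

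The Hasse-Weil bound then closes the argument: $\#C(\mathbf F_q) \geq q + 1 - 2g_C \sqrt{q}$ is positive whenever $q > (2g_C)^2 = \bigl(K_W \bullet H^{\bullet(n-1)} + (n-1) H^{\bullet n} + 2\bigr)^2$, the second clause of the maximum. Any $\mathbf F_q$-rational point of $C$ lies in $W$, giving $W(\mathbf F_q) \neq \emptyset$. The principal difficulty is executing the effective Bertini of the first step: one needs a workable degree bound on the iterated dual varieties beyond the hypersurface case, together with a clean point count on its complement, while simultaneously keeping geometric connectedness alive at every intermediate stage via Lefschetz-type theorems.
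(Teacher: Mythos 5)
Your proposal follows the same overall strategy as the paper: cut $W$ down to a smooth geometrically connected curve $C$ by $n-1$ successive $\mathbf F_q$-rational hyperplane sections, keep geometric connectedness at each stage by a Lefschetz-type argument, compute $g_C$ by adjunction, and finish with the Hasse--Weil bound. The one place you diverge --- and you flag it yourself as the ``principal difficulty'' --- is the effective Bertini step. You propose to establish the existence of a good hyperplane by bounding the degree of the dual variety of the embedded smooth variety $W_{i-1}$ by $d(d-1)^{\dim W_{i-1}}$ and then doing a point count in the dual projective space. The paper simply cites Ballico's effective Bertini theorem over finite fields (Adv.\ Geom.\ \textbf{3} (2003), Theorem 1), which asserts exactly what you need: for a smooth connected subvariety of $\mathbf P^N_{\mathbf F_q}$ of dimension $m$ and degree $d$, if $q > d(d-1)^m$ then some $\mathbf F_q$-rational hyperplane meets it smoothly. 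Ballico's proof does run along the lines you sketch, bounding the degree of the (conormal to the) dual variety and counting rational points in the complement; but that degree bound is not a ``classical'' fact you can wave at --- it is a nontrivial estimate (going back to Katz's work on conormal varieties) that has to be either cited or proved, and is precisely the missing ingredient in your proposal. With Ballico's theorem inserted, the remainder of your argument --- Hartshorne III.7.9 (equivalently Lefschetz connectedness) for geometric connectedness of $\overline{W}\cap\overline{H}_1$ in dimension $\geq 2$, iterated adjunction giving $2g_C-2 = K_W\bullet H^{\bullet(n-1)} + (n-1)H^{\bullet n}$, and Hasse--Weil under $q>(2g_C)^2$ --- matches the paper step for step.
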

One might hope to deduce a result of similar form by applying the Lang--Weil bound \cite{lw}, which considers varieties with an embeding into an ambient projective space. However, the Lang--Weil bound involves the dimension of the ambient projective space. Theorem \ref{nontrivial} does not require knowing such information. Now we specialize to the canonically polarized situation, which provides the most basic objects for the birational Torelli-type theorem. By the fundamental work of \cite{ekedahl} we obtain the following explicit effective result.
\begin{cor}[{{\bf Existence of rational points on canonically polarized surfaces}}]\label{pol sur} For any complete smooth geometrically connected surface $W$ over $\mathbf F_q$ with $K_W$ ample, 
$$W(\mathbf F_q)\neq \emptyset
\ \text{ if }\  
q> \max\left\{25K_W^{\bullet 2}(25K_W^{\bullet 2}-1)^2, \big(30 K_W^{\bullet 2}+2\big)^2\right\}.
$$
\end{cor}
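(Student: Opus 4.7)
The plan is to deduce this as a direct specialization of Theorem \ref{nontrivial} by choosing a very ample divisor $H$ that is a pluricanonical multiple of $K_W$, with the multiplier supplied by Ekedahl's work on pluricanonical maps in positive characteristic.

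First I would invoke Ekedahl's effective pluricanonical very ampleness theorem from \cite{ekedahl}, which is the positive characteristic analogue of Bombieri's theorem: if $W$ is a smooth projective surface with $K_W$ ample over an algebraically closed field of positive characteristic (so in particular over $\overline{\mathbf F_q}$), then $5K_W$ is very ample. Since very ampleness descends under faithfully flat base change, $5K_W$ is already very ample on $W$ over $\mathbf F_q$. I would therefore set $H := 5K_W$ in Theorem \ref{nontrivial}.

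Next comes the numerical substitution. With $n=2$ and $H = 5K_W$, we have
$$
H^{\bullet 2} = 25\, K_W^{\bullet 2},\qquad K_W\bullet H^{\bullet (n-1)} = K_W \bullet H = 5\, K_W^{\bullet 2},
$$
so the two quantities appearing in Theorem \ref{nontrivial} become
$$
H^{\bullet n}\bigl(H^{\bullet n}-1\bigr)^n = 25 K_W^{\bullet 2}\bigl(25 K_W^{\bullet 2}-1\bigr)^2,
$$
and
$$
K_W\bullet H^{\bullet(n-1)} + (n-1)H^{\bullet n} + 2 = 5K_W^{\bullet 2} + 25K_W^{\bullet 2}+2 = 30 K_W^{\bullet 2}+2.
$$
Plugging these into the hypothesis of Theorem \ref{nontrivial} gives exactly the stated bound on $q$.

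There is no real obstacle beyond correctly citing the right pluricanonical very ampleness statement; the corollary is essentially a bookkeeping specialization once $H=5K_W$ is seen to be the economical choice. The only subtlety is verifying that Ekedahl's statement applies in the generality required here (surfaces over $\mathbf F_q$, not just over an algebraically closed field), which is handled by descent of very ampleness. All other steps are arithmetic substitutions in the intersection numbers of $H=5K_W$.
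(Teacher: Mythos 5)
Your proposal is correct and is exactly the paper's argument: the paper likewise takes $H=5K_W$ (i.e.\ $K_W^{\otimes 5}$), cites Ekedahl for its very ampleness, and the numerical substitutions you carry out reproduce the stated bound. The extra remark about descent of very ampleness from $\overline{\mathbf F}_q$ is a harmless refinement the paper leaves implicit.
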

This can be derived by simply taking $H=K_W^{\otimes 5}$ in Theorem\ref{nontrivial}, since $K_W^{\otimes 5}$ is very ample according to \cite{ekedahl}. It seems that so far there is no established generalization of such effective very ampleness result when the dimension is greater than two. In dimension two. On the other hand, one may also consider the more general situation that $W$ be a smooth surface of general type. Explicit effective result of similar kind may be obtained in this more general situation, but it requires more elaborative treatments of singularities of canonical models, which will appear in \cite{chi2}. 

As for the organization of the paper, in Sec.\,2 we briefly review the notion of $p$-adic integration and introduce the pseudonorms. In Sec.\,3 we prove a density theorem (Theorem \ref{dens}) for rational points in the set of geometric points of integral schemes over a complete normed field. Although for later use we only need the density theorem for $p$-adic fields, which can be more directly obtained by using $p$-adic measure theory, we think it is worth proving a more general version the proof of which makes no use of integration. In Sec.\,4 we show that the pseudonormed pluricanonical spaces for a family of $R$-birational invariants. In Sec.\,5 we prove a $p$-adic parallel (Theorem \ref{equim}) to an equimeasurability theorem of Rudin. Rudin's theorem is for the field $\mathbf C$, and the $\mathbf C$-version of the birational Torelli type theorem can be viewed as a special case of Rudin's result, as pointed out to me by S.\ Antonakoudis. See the survey article \cite{yau}. In his proof Rudin quotes Wiener's invariant subspace theorem. We preferred not proceeding detailed examinations of whether Wiener's result has a suitable analogue for $K$; instead we adopt a setting of a flavour of the Hilbert Nullstellensatz and makes more elementary use of the Fourier transformation. A key step in both Rudin's and our proofs is to construct a non-identically $0$ function of particular type. The analysis in the case of $\mathbf C$ and our case of $p$-adic are quite different, the latter being simpler. Finally, in Sec.\,6 we derived the first main theorem from the $p$-adic equimeasurability theorem. In contrast with the case of $\mathbf C$, we have $K\neq\overline{K}$, and further care of density needs to be taken. Finally, in Sec.\,7 we prove Theorem \ref{nontrivial}. The method is to reduce the proof to the case of curves and then apply  the Hasse--Weil bound. In the case over finite fields, this requires finding possibly non-generic but smooth hyperplane sections, and fortunately we found \cite{ballico}.

Finally I like to make some notes about the current paper and work of Shuang-Yen Lee. The first version of the current paper only consisted of Theorem \ref{image} and the $p$-adic birational Torelli theorem, obtained in October, 2020; it did not treat the question about the nontriviality of $X(K)$. Later on April 30, 2021, I happened to see \cite{lee}, which is a draft of an undergraduate thesis by Lee supervised by Chin-Lung Wang. In \cite{lee} Lee obtained the $1$-dimensional case of the $p$-adic birational Torelli theorem, essentially following the arguments of Royden \cite{royden}, which is independent from the approach we have adopted here, the equimeasurability framework of Rudin. The approach of Royden seems hard to be carried out in higher dimensions to yield the most general birational Torelli theorem, at least for now. On May 8, 2021 I informed Chin-Lung Wang and other collegue that earlier I have obtained Theorem \ref{image} and the $p$-adic birational Torelli theorem, with a copy of my first draft. In July 2021, I reported on Theorem \ref{image} and Corollary \ref{bir tor}, as well as the $p$-adic analogue of Rudin's equimeasurablity theorem (Theorem \ref{equim}), in the HAYAMA Symposium on Complex Analysis in Several Variables XXII. Very recently I am aware of a preprint by Lee \cite{lee2}, which is a revised extension of \cite{lee}. Theorem 0.1 of \cite{lee2} reads very similar to our Theorem \ref{image}, except that we have $R$ as the base ring instead of $K$. Actually, assuming Theorem \ref{equim} and replacing all appearances of $R$ by $K$ in our proof of Theorem \ref{image} (Sec.\!\ \ref{proof of image}) one obtains a proof of Theorem 0.1 of \cite{lee2}. Besides, \cite{lee2} contains essentially a proof of my Theorem {\ref{equim}. Both proofs of the $p$-adic equimeasurability result models on Rudin's setting. Rudin's proof of equimeasurability theorem over $\mathbf C$ uses properties of Fourier transform and Wiener's invariant subspace theorem. For the $p$-adic analogue, my proof makes the Fourier transform arguments work and bypasses the usage of Wiener's invariant subspace theorem; Lee's proof is more direct in nature, which relies on a decomposition of the unity function via the Schwartz--Bruhat function and bypasses the usage of Fourier transform. According to Lee, he did not know that I had obtained my Theorem \ref{image} nor Theorem \ref{equim}; in April 2022,  under the suggestion by Chin-Lung Wang, he abandoned Royden's approach taken by \cite{lee} and adopted Rudin's equimeasurability framework in order to obtain higher dimensional results. Now come back to \cite{lee}. By using the Hasse--Weil bound (cf. \cite{hartshorne} Exercise V 1.10), it is mentioned in \cite{lee} that if $X$ is a smooth geometrically connected curve over $\mathbf F_q$ of genus $g$ and if $q>4g^2$, then $X(\mathbf F_{q})\neq\emptyset$. This raised my interests in finding similar conditions in higher dimensions, with $g$ replaced by intersection numbers involving $K_X$ and polarization divisors, as mentioned above. Part of the outcomes are Theorem \ref{nontrivial} and Corollary \ref{pol sur}, which I wrote in the second version of this paper, also sent to several colleagues around May 2021.  

\section*{Acknowledgement} The $p$-adic case of pseudonorm problems was raised to the author by Professor Shing-Tung Yau about ten years ago. The author would like to thank Professor Yau for having shared with him many problems related to pseudonorms and for constant encouragement. The author also like to thank Professor Dinh Tien-Cuong for showing his interest in the current work during the HAYAMA symposium 2021.  

\section{\bf Integration on $K$-analytic manifolds and pseudonorms}
Let $(K,|\cdot|_K)$ denote a $p$-adic valued field, and let $\mu_{K,n}$ be the associated Haar measure on the locally compact Hausdorff abelian group $K^n$ normalized by setting $\mu_{K,n}(B)=1$ for the unit ball $B=B_1(0)$. We have the following analogue of the fundamental formula for the Lebesgue measure on $\mathbf R^n$  (cf.\ \cite{igusa} 7.4):
\begin{change}\label{change}
Let $\xymatrix{U\ar[r]^-{\varphi}&V}$ be a $K$-bianalytic map between open sets $U$ and $V$ of $K^n$. For any $\mathbf C$-valued Borel measurable function $h$ on $V$ we have
$$
\int_V h\,d\mu_{K,n}= \int_V (h\circ\varphi)\,|\varphi'|_K\,d\mu_{K,n}
$$ 
if one of the two integral exists, where $\varphi'(u)$ denotes the Jacobi matrix of $\varphi$ at the point $u\in U$.
\end{change}

In the following we let $X$ be a $K$-analytic manifold of dimension $n$ (always assumed Hausdorff and second countable). Suppose that $X$ is covered by coordinate patches $(U_i, \varphi_i)\, (i\in I)$ with coordinate systems $u_i=(u_i^1,\dots,u_i^n)$. We may and will assume that $\varphi_i(U_i)$ is compact, since every open ball in $K^n$ is compact. 
\begin{defi} \label{measure 0} We say that a subset $S\subseteq X$ is said to be of measure $0$ in $S$ if $\mu_{K,n}\big(\varphi_i(S\cap U_i)\big)=0$ for every $i\in I$.
\end{defi}
\begin{lem}\label{measure 0 1} Every nowhere dense $K$-analytic subsets of a $K$-analytic manifolds is of measure $0$.  
\end{lem}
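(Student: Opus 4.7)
The plan is to reduce the assertion to a local claim in $K^n$ and then proceed by induction on $n=\dim X$. Since $X$ is second countable, it is covered by countably many coordinate patches $(U_i,\varphi_i)$, and by countable subadditivity of Haar measure it suffices to show, for each $i$, that $\varphi_i(S\cap U_i)$ has $\mu_{K,n}$-measure zero in $K^n$. I would then cover $\varphi_i(U_i)$ by countably many small polydisks on which $S$ is defined by the simultaneous vanishing of finitely many convergent power series $f_1,\dots,f_k$. If every $f_j$ were identically zero on one of these polydisks then $S$ would contain an open subset of $X$, contradicting nowhere-denseness; so on each polydisk some $f_{j_0}$ is nonzero and $S\cap U_i$ is contained in $\{f_{j_0}=0\}$. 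This reduces the problem to showing that the zero locus of a single nonzero $K$-analytic function $f$ on a polydisk $D\subseteq K^n$ has $\mu_{K,n}$-measure zero.

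I would then induct on $n$. For $n=1$, a nonzero convergent power series on a compact polydisk in $K$ has only isolated zeros (this follows from the standard non-archimedean Weierstrass preparation statement: after shrinking, $f=u\cdot P$ with $u$ a unit in the Tate algebra and $P$ a polynomial of bounded degree), so $\{f=0\}\cap D$ is finite and of Haar measure zero. For $n\geqslant 2$, expand $f=\sum_{j\geqslant 0} a_j(x_1,\dots,x_{n-1})\,x_n^j$ with each $a_j$ a convergent power series on the projection $D'\subseteq K^{n-1}$ of $D$. Since $f\not\equiv 0$, some $a_{j_0}\not\equiv 0$; by the inductive hypothesis applied to the nowhere-dense analytic set $\{a_{j_0}=0\}\subseteq D'$, that set has $\mu_{K,n-1}$-measure zero. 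For every $(x_1,\dots,x_{n-1})\in D'$ outside it, the slice $\{x_n\in K:f(x_1,\dots,x_{n-1},x_n)=0\}$ is the zero set of a nonzero one-variable convergent power series, hence finite. Applying Fubini's theorem to the product decomposition $\mu_{K,n}=\mu_{K,n-1}\otimes\mu_{K,1}$ then gives $\mu_{K,n}(\{f=0\}\cap D)=0$.

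The step I expect to demand the most care is the local presentation in the first paragraph: one must verify that a $K$-analytic subset in the sense of \cite{igusa} may indeed be expressed on a sufficiently small polydisk as the common zero set of finitely many convergent power series, and that global nowhere-denseness of $S$ forces, on each such polydisk, at least one defining function to be not identically zero. Both are standard but deserve the explicit bookkeeping sketched above. Once they are granted, the inductive slicing via Fubini is completely routine, with the base case secured by the non-archimedean isolated-zero property.
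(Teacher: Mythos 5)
Your proof is correct, and it takes a genuinely different route from the paper's. The paper, after the same local reduction to the zero set $S$ of a single nonzero analytic $F$ on a compact open $U\subseteq K^n$, applies the Weierstrass preparation theorem together with a linear change of coordinates around each point to produce a polydisk $B\times B'$ on which the projection $S\cap(B\times B')\to B$ has \emph{every} fibre finite, then concludes by Fubini and compactness of $U$; there is no induction on $n$. You instead expand $F$ as a power series in the last coordinate, single out a nonvanishing coefficient $a_{j_0}$, and induct on $n$: the bad set $\{a_{j_0}=0\}$ has $\mu_{K,n-1}$-measure zero by the inductive hypothesis, and outside it every slice is the finite zero set of a nonzero one-variable power series, so Fubini again closes the argument. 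The trade-off is that the paper's use of Weierstrass preparation makes the fibre-finiteness uniform (no exceptional slices, no induction), at the cost of a coordinate change, while your slicing argument is slightly more elementary in the inductive step (it never needs a distinguished polynomial in $n\geqslant 2$ variables) but requires you to track the measure-zero exceptional set. Both are legitimate; the content is the same Fubini-plus-isolated-zeros mechanism, organized differently.
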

\begin{proof}
The question being local completely, we only need to consider the zero set $S$ of a $K$-analytic function $F$ on a compact open subset $U$ of $K^n$. The case $n=1$ is clear since all points of $S$ are isolated. Now we assume $n=2$. For every $p\in U$ we may apply the Weierstrass preparation theorem, which works for arbitrary field equipped with a complete absolute value. By a linear change of coordinate we may find open discs $B\subseteq K^{n-1}$ and $B'\subseteq K$ such that $p\in B\times B'\subseteq U$ and the projection from $S\cap B\times B'$ to $B$ has all its fibres finite. By Fubini's theorem we see that $\mu_{K,n}\big(S\cap (B\times B')\big)=0$. Since $U$ is compact, we have $\mu_{K,n}(S\cap U)=0$.  
\end{proof}

Now we let $K_X$ denote the canonical bundle of $X$, the $K$-analytic line bundle of exterior forms of degree $n$. To each Borel measurable section $\omega$ of $K_X^{\otimes m}, (m\in\mathbf N)$ one may associate canonically a regular\footnote{Here a (positive) Borel measure $\mu$ is said to be regular if 
\begin{itemize}
\item[(i)] $\mu(K)$ is finite for every compact set $K$, 
\item[(ii)] $\mu(B)=\inf\{\mu(U)\,|\,B\subseteq U\text{ and }U\text{ is open in }X\}$ for every Borel set $B$, and 
\item[(iii)] $\mu(U)=\sup\{\mu(K)\,|\,K\subseteq U\text{ and }K\text{ is sompact}\}$.
\end{itemize}
}
 Borel measure $\la\omega\ra^{\frac{1}{m}}$ on $X$. To see this,
we first express $\omega|_{U_i}$ in the form
$$
f_i(u_i^1,\dots,u_i^n)\,(du_i^1\wedge\cdots\wedge du_i^n)^{\otimes m}
$$ 
where $f_i$ is a continuous function on the open set $\varphi_i(U_i)$ of $K^n$. By Riesz's representation theorem, the linear functional
$$
\xymatrix@R3pt{
C_{\mathrm c}(U_i)\ar[r]&\mathbf C\\
h\ar@{|->}[r] & \int_{\varphi_i(U_i)} (h\circ \varphi_i^{-1}) |f_i|^{\frac{1}{m}}\, d\mu_{K,n}
}
$$
defines a regular Borel measure $\mu_i$ on $U_i$. By the above change-of-variable formula we see that $\mu_i|_{U_i\cap U_{i'}}=\mu_{i'}|_{U_i\cap U_{i'}}$ for every pair $(i,i')$. Thus $\mu_i$ patches together to form a regular Borel measure on $X$, which is the expected $\la\omega\ra^{\frac{1}{m}}$. 
\begin{defi} \label{pseudonorm} We define the $m$-th {\it pseudonorm}  
\begin{align*}
\la\!\la\omega\ra\!\ra_m:=\int_X\la\omega\ra^{\frac{1}{m}}\in[0,\infty]
\end{align*}
for every Borel measurable section $\omega$ of $K_X^{\otimes m}$ on $X$. 
\end{defi}
\begin{rmk} \label{measure 0 2} Definition \ref{measure 0} and Lemma \ref{measure 0 1} both have a corresponding version for the measure $\la\!\la\omega\ra\!\ra^{\frac{1}{m}}$.
\end{rmk}

\section{\bf Density of rational points in the set of geometric points}

Let $W$ be an integral algebraic scheme defined over a field $K$. In this section we discuss the density of $W(K)$ in $W(\overline{K})$ with respect to the Zariski topology. The density does not hold in general, as shown by the real algebraic set $\{(x,y)\in\mathbf R^2\,|\,x^2+y^2=0\}$ in the complex one $\{(x,y)\in\mathbf C^2\,|\,x^2+y^2=0\}$. Note that $(0,0)$ is a singular point of the complex algebraic scheme defined by the polynomial $X^2+Y^2=0$. The following density theorem indicates that this phenomenon cannot happen assuming smoothness.
\begin{thm}\label{dens} Let $K$ be a nontrivial complete valued field. Suppose that $W$ is an integral algebraic scheme over $K$ with $\dim\, W>0$. If $x\in W(K)$ is a smooth point\footnote{If an integral algebraic scheme over $K$ has a smooth $K$-point, then it is geometrically integral.} of $W$, then every $K$-analytic neighborhood of $x$ in $W(K)$ is dense in $W(\overline{K})$ with respect to the Zariski topology over $\overline{K}$.
\end{thm}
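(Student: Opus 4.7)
The plan is to argue by contradiction: assume that some $K$-analytic neighborhood $U$ of $x$ in $W(K)$ is not Zariski dense in $W(\overline{K})$, and derive a contradiction with the integrality of $W$ and the smoothness of $x$.

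First I pass to an affine open $V \ni x$ of $W$ and shrink $U$ so that $U \subseteq V(K)$; set $A := \mathcal O_W(V)$. Since $W_{\overline{K}}$ is integral (by the footnote that geometric integrality is automatic at a smooth $K$-point) and $V_{\overline{K}}$ is dense open in it, the Zariski closure of $U$ inside $V_{\overline{K}}$ is a proper closed subset, so its ideal in $A \otimes_K \overline{K}$ contains a nonzero element $F$. Fixing a $K$-basis $\{e_j\}_{j \in J}$ of $\overline{K}$ and writing $F = \sum_j f_j e_j$ with $f_j \in A$ almost all zero, the identity $0 = F(y) = \sum_j f_j(y) e_j$ for every $y \in U$, combined with the $K$-linear independence of the $e_j$, forces $f_j(y) = 0$ for each $j$. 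Choosing an index $j_0$ with $f_{j_0} \neq 0$ reduces the problem to ruling out the existence of a nonzero $f := f_{j_0} \in A$ that vanishes on $U$.

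For this I invoke smoothness at $x$: the $K$-analytic implicit function theorem produces local parameters $t_1, \ldots, t_n \in \mathfrak m_{V,x}$ inducing a $K$-bianalytic identification of some open neighborhood of $x$ in $V(K)$ with an open ball $B \subseteq K^n$ centered at $0$. Under this identification, $f$ pulls back to a $K$-analytic function $g$ on $B$, represented by a convergent power series, and $g$ vanishes on some nonempty open sub-ball of $B$. By uniqueness of the power series representation (its coefficients are determined by the iterated partial derivatives at $0$, which all vanish), the formal expansion of $g$ at $0$ equals zero.

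Finally, this formal expansion coincides with the image of $f$ under the canonical composition $A \hookrightarrow \mathcal O_{V,x} \hookrightarrow \widehat{\mathcal O}_{V,x} \cong K[[t_1, \ldots, t_n]]$; the first map is injective because $A$ is a domain (integrality of $W$) and the second by Krull's intersection theorem in the Noetherian local ring $\mathcal O_{V,x}$. Hence $f = 0$ in $A$, the desired contradiction. The step I expect to be the most delicate is the initial base-change reduction: one must confirm that evaluation at $K$-points respects the direct-sum decomposition $\overline{K} = \bigoplus_j K e_j$, so that vanishing of $F$ on $U \subseteq V(K)$ really forces vanishing of each $K$-component $f_j$. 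Once this bookkeeping is in place, the remainder of the argument is standard $K$-analytic and commutative-algebraic input.
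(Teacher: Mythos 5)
Your proof is correct, and it takes a genuinely different route from the paper's. The paper proceeds by induction on $\dim W$: starting from the one-dimensional case (any infinite subset of an irreducible curve over $\overline K$ is Zariski dense), it cuts down by a hypersurface section $H_z$ of sufficiently high degree $d$ chosen, via a strengthened Bertini theorem, so that $W\cap H_z$ is integral, smooth at $x$, not contained in the purported obstruction $V_+(G)$, and admits $U\cap H_z(K)$ as a $K$-analytic neighborhood of $x$; this also requires a separate lemma that $\mathbf P^k(K)$ is Zariski dense in $\mathbf P^k(\overline K)$. Your argument instead works entirely locally: decompose the obstruction $F\in A\otimes_K\overline K$ along a $K$-basis of $\overline K$ to reduce to a nonzero $f\in A$ vanishing on $U$, then use the $K$-analytic implicit function theorem at the smooth point $x$ to realize $f$ as a convergent power series on a ball that vanishes near the origin, so its formal Taylor series vanishes; since $A\hookrightarrow\mathcal O_{V,x}\hookrightarrow\widehat{\mathcal O}_{V,x}\cong K[[t_1,\dots,t_n]]$ is injective (integrality of $W$ plus Krull intersection), $f=0$, a contradiction. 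Your approach avoids Bertini entirely and replaces the dimension induction by the algebraic rigidity of the completion map, which is arguably more elementary and self-contained; the paper's approach stays within projective geometry and reuses machinery (the Bertini-type statement) that it also needs elsewhere. Two small points worth tightening in your write-up: you should say explicitly that the open sub-ball on which $g$ vanishes can be chosen to contain $0$ (this is immediate since $U$ is a neighborhood of $x$, which corresponds to $0$; without it, a locally analytic function on a non-archimedean ball need not be a single power series, so vanishing away from $0$ would not suffice); and, since the theorem is stated for an arbitrary nontrivial complete valued field $K$, which may have positive characteristic, the phrase "coefficients are determined by iterated partial derivatives at $0$" should be replaced by the characteristic-free argument that a convergent power series vanishing on a neighborhood of the origin has all coefficients zero (e.g.\ by isolating a minimal nonzero monomial, or by induction on the number of variables via restriction to coordinate slices).
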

\begin{proof} Suppose that $W$ is locally defined around $x$ by polynomials $F_i(X_1,\dots, X_r)\in K[X_1,\dots,X_r]\, (i=1,\dots,s)$ in an affine space $\mathbf A^r_K$. Then $x\in W(K)$ is a smooth point if and only if 
$$
\mathrm{rk}_K \left(\frac{\partial F_i}{\partial X_j}(x)\right)=\mathrm{rk}_{\overline K} \left(\frac{\partial F_i}{\partial X_j}(x)\right)=r-\dim W
\quad \text{(the Jacobian criterion).}
$$
In particular, every open neighborhood of $x$ in $W(K)$ is $K$-analytically isomorphic to an open set of $K^n\, (n=\dim\, W)$, by the $K$-analytic implicit function theorem. When $\dim\, W=1$, such a neighborhood is an infinite set, and hence is Zariski dense in the irreducible curve $W(\overline{K})$. We then proceed by induction on $\dim\,W$. Suppose that the statement holds when $\dim\, W<n$ (with $n>1$) and consider the case $\dim\,W=n$. Since the question is local in nature, we may assume that $W$ is a closed subscheme of some projective space $\mathbf P^N_K$. Consider any $K$-analytic coordinate patch $U$ of $W(K)$ around $x$. We claim that $U$ is Zariski dense in $W(\overline{K})$. Were this false, 
$$
U\subseteq V_+(G)\nsupseteq W(\overline{K})
$$ 
for some nonzero homogeneous polynomial $G(Y_0,\dots,Y_N)$ with all coefficients in $K$. For any $d\in\mathbf N$ we let $N_d:={{N+d}\choose{d}}$. For $z=(z_{\alpha_0\cdots\alpha_N})_{\alpha_0+\cdots+\alpha_N=d}\in \overline{K}^{N_d}\setminus \{0\}$ we let 
$$
L_{z}(Y_0,\dots,Y_N)=\sum\limits\nolimits_{\alpha_0+\cdots+\alpha_N=d}z_\alpha Y^\alpha+\cdots+z_NY_N,
$$
$H_z=V_+(L_z)$ be the corresponding hyperplane of $\mathbf P^N_K$, and
$$
P_x:=\left\{[z]\in \mathbf P^{N_d-1}(\overline{K})\,\left|\, x\in H_z(K)\right.\right\}.
$$
Note that $P_x$ is itself a projective hyperplane of $\mathbf P^{N_d}(\overline{K})$ defined over $K$, since $x$ is a $K$-rational point. Consider the sets
$$
V_1:=\left\{[z]\in P_x\,\left|\, W(\overline{K})\nsubseteq H_z(\overline{K})\right.\right\},
$$
$$
V_2:=\left\{[z]\in P_x\,\left|\, \mathbf{T}_xW(\overline{K})\nsubseteq H_z(\overline{K})\right.\right\},
$$
$$
V_3:=\left\{[z]\in P_x\,\left|\, W(\overline{K})\cap H_z(\overline{K})\nsubseteq V_+(G)\right.\right\},
$$
and$$
V_4:=\left\{[z]\in P_x\,\left|\, W_{\overline{K}}\cap (H_z)_{\overline{K}}\text{ is integral}\right.\right\},
$$
where $\mathbf T_xW(\overline{K})$ is the projective tangent space of $W(\overline{K})$ at $x$, and the intersection $W_{\overline{K}}\cap (H_z)_{\overline{K}}$ is to be understood as scheme theoretic. It is direct to verify that  $V_1$, $V_2$, and $V_3$ are all nonempty open sets in $\mathbf P^r(\overline{K})$. By a strengthened version of Bertini's theorem (\cite{gk} Corollary 3.7) we see that, if we choose $d$ sufficiently large, then $V_4$ contains a nonempty open set in $\mathbf P^r(\overline{K})$. Note that $U\cap H_z(K)$ is $K$-analytically smooth at $x$. To see this, note that around $x$ the set of defining equations of $U$ in $\mathbf P^N(K)$ is the same as that of $W(\overline{K})$ in $\mathbf P^N(\overline{K})$. Therefore the condition of the Jacobian criterion for the latter (the algebraic case over $\overline{K}$) at $x$ implies that for the former (the $K$-analytic case).\\ 
{\ul{\bf Claim}.} $P_x\cap \mathbf P^{N_d-1}(K)$ is dense in $P_x$. 

Were the claim proved, one may find some point $[z]\in V_1\cap V_2\cap V_3\cap \mathbf P^{N_d}(K)$. Since $[z]\in \mathbf P^{N_d}(K)$, we have a reduced closed subscheme $W'$ of $W$ such that $W'(\overline{K})=W(\overline{K})\cap H_z(\overline{K})$. 
\begin{itemize}
\item[(1)] Since $[z]\in V_1$, by Krull's Hauptidealsatz $W'(\overline{K})$ has pure dimension $n-1$ and hence $x$ is not an isolated point of $W'(\overline{K})$. 
\item[(2)] Since $[z]\in V_2$, $x$ is a regular point of $W'(\overline{K})$.
\item[(3)] Since $[z]\in V_3$, $V_+(G)\cap W'(\overline{K})$ is a proper closed subset of $W'(\overline{K})$.
\item[(4)] Since $[z]\in V_4$, $W'$ is integral.
\end{itemize}
Therefore, we may apply the induction hypothesis to $W'$. The induction hypothesis implies that $U\cap H_z(K)$ is dense in $W'(\overline{K})$. However, $U\cap H_z(K)$ is contained in the proper closed subset $V_+(G)\cap W'(\overline{K})$ of $W'(\overline{K})$, as contradicts to its density in $W'(\overline{K})$. 

It remains to verify the claim. Since $P_x$ is a projective hyperplane of $\mathbf P^{N_d-1}(\overline{K})$ defined over $K$, it suffices proving the following statement: $\mathbf P^k(K)$ is dense in $\mathbf P^k(\overline{K})$ with respect to the Zariski topology. Suppose that $\mathbf P^k(K)$ is not dense in $\mathbf P^k(\overline{K})$, say, $\mathbf P^k(K)\subseteq V_+(F)$ for some nonzero homogeneous polynomial 
$$
F(Z_0,\dots, Z_k)=\sum_{\alpha}F_\alpha Z^\alpha\in\overline{K}[Z_0,\dots,Z_k].
$$ 
Fixing a basis $e_s\, (s\in S)$ of $\overline{K}$ over $K$, we may write
$$
F(Z_0,\dots, Z_k)=\sum_{s\in S_0}\Big( e_s\sum_{\alpha}F_{s,\alpha} Z^\alpha\Big)
$$
with $S_0$ finite and $F_{s,\alpha}\in K$ for all $(s,\alpha)$. The condition $\mathbf P^k(K)\subseteq V_+(F)$ is equivalent to that $\sum_\alpha F_{s,\alpha}b_\bullet^\alpha=0$ for every $s\in S_0$ if $(b_0,\dots,b_k)\in K^{k+1}\setminus\{0\}$. Since $K$ is infinite, this implies that $F_{s,\alpha}=0$ for every $(s,\alpha)$, and hence $F=0$, a contradiction.
\end{proof}

\begin{rmk} Theorem \ref{dens} has a simpler proof using measure theory when $K$ is a $p$-adic field and $W$ is proper and smooth over its ring of integers $R$. This is the only situation we need for the rest of the paper. If this is the case, for any closed subset $Z\nsubseteq W$, it is well known that $Z(K)=Z(R)$ is of measure $0$ in the $K$-analytic manifold $W(K)=W(R)$. On the other hand all nonempty open subsets $W(K)$ has strictly positive measure. Then we obtain the expected density by working on affine charts.  
\end{rmk}

\section{\bf $R$-birational equivalence and pseudonorms}
In this section we suppose that $K$ is a $p$-adic field and $R$ its ring of integers. Consider an $R$-birational map $\xymatrix{X\ar@{-->}[r]^-f&Y}$ between smooth $R$-schemes. We recall the following easy version of global change-of-variable formula.
\begin{lem}\label{global change} If $f$ is an $R$-morphism, we have $\la\!\la\omega\ra\!\ra_m=\la\!\la f^*\omega\ra\!\ra_m$ for any $\omega\in \Gamma(Y,\omega^{\otimes m}_{Y/R})$.
\end{lem}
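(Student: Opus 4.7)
The plan is to reduce this global identity to the pointwise Change-of-variable formula recalled at the beginning of Section~2, by exploiting that an $R$-birational $R$-morphism is already an isomorphism away from a set that is negligible for both pseudonorm measures.

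Concretely, since $f$ is an $R$-birational $R$-morphism, there is a dense open subscheme $U\subseteq X$ meeting every fibre of $\pi\colon X\to\mathrm{Spec}\,R$ such that $f(U)$ is open in $Y$ and $f|_U\colon U\to f(U)$ is an isomorphism of $R$-schemes; in particular it restricts to a $K$-bianalytic map $U(K)\to f(U)(K)$. Setting $Z:=X\setminus U$ and $Z':=Y\setminus f(U)$, the generic fibres $Z_K$ and $Z'_K$ are proper closed subschemes of the integral generic fibres $X_K$ and $Y_K$ (which are integral because $X$ and $Y$ are integral and their generic points necessarily lie over the generic point of $\mathrm{Spec}\,R$). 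Hence $Z(K)$ and $Z'(K)$ are nowhere dense $K$-analytic subsets of the $K$-analytic manifolds $X(K)$ and $Y(K)$, and by Lemma~\ref{measure 0 1} together with Remark~\ref{measure 0 2} they have measure~$0$ for $\la f^*\omega\ra^{1/m}$ and $\la\omega\ra^{1/m}$ respectively. Consequently,
$$
\la\!\la\omega\ra\!\ra_m=\int_{f(U)(K)}\la\omega\ra^{1/m}\quad\text{and}\quad\la\!\la f^*\omega\ra\!\ra_m=\int_{U(K)}\la f^*\omega\ra^{1/m}.
$$

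Next, I would cover $U(K)$ by countably many $K$-analytic coordinate patches $(U_i,\varphi_i)$ chosen small enough that each $V_i:=f(U_i)\subseteq f(U)(K)$ is also a coordinate patch with coordinates $y=(y^1,\dots,y^n)$, and write $\omega|_{V_i}=g_i(y)(dy^1\wedge\cdots\wedge dy^n)^{\otimes m}$ with $g_i$ $K$-analytic. A routine computation of the pullback of $m$-fold top forms under the $K$-bianalytic map $f|_{U_i}$ yields
$$
(f^*\omega)|_{U_i}=(g_i\circ f)(x)\,\bigl(\det f'(x)\bigr)^m(dx^1\wedge\cdots\wedge dx^n)^{\otimes m},
$$
so the local densities are $|g_i|_K^{1/m}$ on $V_i$ and $|g_i\circ f|_K^{1/m}\,|\det f'|_K$ on $U_i$. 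The Change-of-variable formula applied to $\varphi=f|_{U_i}$ and $h=|g_i|_K^{1/m}$ then gives
$$
\int_{V_i}|g_i|_K^{1/m}\,d\mu_{K,n}=\int_{U_i}|g_i\circ f|_K^{1/m}\,|\det f'|_K\,d\mu_{K,n},
$$
which is precisely the identity $\la\omega\ra^{1/m}(V_i)=\la f^*\omega\ra^{1/m}(U_i)$. Passing to a Borel partition of $U(K)$ that refines the cover $\{U_i\}$ and summing via countable additivity then produces the desired global equality.

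The only non-routine point, as opposed to the standard bookkeeping above, is the measure-zero statement for the birationality exceptional loci $Z(K)$ and $Z'(K)$; this rests on the integrality of the generic fibres, which is what ensures that $Z_K$ and $Z'_K$ are genuinely proper closed subschemes so that Lemma~\ref{measure 0 1} can be invoked.
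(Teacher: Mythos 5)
Your proposal follows essentially the same route as the paper: restrict to the open locus where $f$ is an isomorphism, show the complementary $K$-analytic sets are measure zero so the pseudonorm integrals are unchanged by dropping them, and then apply the local change-of-variable formula over coordinate patches. The one place you pass too quickly is the step ``$Z_K$ and $Z'_K$ are proper closed subschemes of the integral generic fibres, \emph{hence} $Z(K)$ and $Z'(K)$ are nowhere dense in $X(K)$ and $Y(K)$.'' Properness of the closed subscheme on the algebraic side does not by itself give nowhere density of its $K$-points in the analytic manifold; one has to rule out that the defining equations vanish identically on a $K$-analytic neighborhood of some rational point, and this is precisely what Theorem~\ref{dens} supplies (any $K$-analytic neighborhood of a smooth $K$-point is Zariski dense in $X(\overline K)$, so it cannot be swallowed by a proper closed subset). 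The paper invokes Theorem~\ref{dens} explicitly at this step, and your argument should too, after which it matches the paper's proof in substance and only adds a more spelled-out local-chart computation of the change of variables.
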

\begin{proof} There exists a dense open subset $V$ of $Y$ such that $f$ is an $R$-isomorphism from $f^{-1}(V)$ to $V$. By the local change-of-variable formula \ref{change} we have 
$$
\int_{V(K)}\la\omega\ra^{\frac{1}{m}}=\int_{f^{-1}\big(V(K)\big)}\la f^*\omega\ra^{\frac{1}{m}}.
$$
Let $Z=Y\setminus V$ and $Z'=X\setminus f^{-1}(Z)$. Then $Z(K)$ and $Z'(K)$ are $K$-analytic subsets of the $K$-analytic manifolds $Y(K)$ and $X(K)$, respectively. Theorem \ref{dens} implies that they are nowhere dense, and hence are of measure $0$ in their ambient manifolds by Remark \ref{measure 0 2}. Therefore, 
$$
\la\!\la\omega\ra\!\ra_m=\int_{Y(K)}\la\omega\ra^{\frac{1}{m}}=\int_{V(K)}\la\omega\ra^{\frac{1}{m}}=\int_{f^{-1}\big(V(K)\big)}\la f^*\omega\ra^{\frac{1}{m}}=\int_{X(K)}\la f^*\omega\ra^{\frac{1}{m}}=\la\!\la f^*\omega\ra\!\ra_m
$$
\end{proof}
The pull-back map $f^*$ extends to the $R$-birational case. Moreover, we have: 
\begin{thm} \label{bir inv} If both $X$ and $Y$ are proper over $\mathrm{Spec}\, R$, then for every $m\in\mathbf N$ there is a canonical $R$-linear isometry
$$
\xymatrix{\big(\Gamma(Y,\omega_{Y/R}^{\otimes m}), \la\!\la\cdot\ra\!\ra_m\big)\ar[r]^-{f^*}&\big(\Gamma(X,\omega_{X/R}^{\otimes m}), \la\!\la\cdot\ra\!\ra_m\big).}
$$  
\end{thm}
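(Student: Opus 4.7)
The plan is to construct $f^*$ in two stages — first as a pullback on the open locus where $f$ is representable as an $R$-morphism, then by Hartogs-type extension across an indeterminacy locus of codimension at least two — and then deduce the isometry from Lemma \ref{global change} together with Remark \ref{measure 0 2}.

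\textbf{Step 1 (codimension of indeterminacy).} Write $f \colon X \dashrightarrow Y$ and let $U \subseteq X$ be the maximal open set on which $f$ is defined. Since $X$ is smooth over the regular base $\mathrm{Spec}\,R$, it is itself regular; since $Y$ is proper over $\mathrm{Spec}\,R$, the valuative criterion of properness applied at every codimension-one point $x \in X$ (whose local ring is a DVR) forces $f$ to extend across $x$. Hence $X \setminus U$ has codimension $\geqslant 2$ in $X$, and the restriction $f|_U \colon U \to Y$ is a genuine $R$-morphism, which is moreover $R$-birational.

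\textbf{Step 2 (pullback and extension).} The $R$-morphism $f|_U$ yields a pullback $(f|_U)^*$ sending $\Gamma(Y, \omega_{Y/R}^{\otimes m})$ into $\Gamma(U, \omega_{X/R}^{\otimes m}|_U)$. Because $\omega_{X/R}^{\otimes m}$ is a line bundle on the Noetherian normal scheme $X$ and $X \setminus U$ has codimension $\geqslant 2$, every section on $U$ extends uniquely to a global section on $X$; define $f^*\omega$ to be this extension. Uniqueness of extension makes $f^*$ canonical and $R$-linear. Running the same construction on $f^{-1} \colon Y \dashrightarrow X$ (here we use that $X$ is also proper over $R$) and comparing on a common open $V$ where both $f$ and $f^{-1}$ are $R$-isomorphisms shows $(f^{-1})^* \circ f^* = \mathrm{id}$ on $V$, hence on all of $X$ by uniqueness; symmetrically in the other direction. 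Thus $f^*$ is an $R$-linear isomorphism.

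\textbf{Step 3 (isometry).} The restriction $f|_U \colon U \to Y$ is an $R$-birational $R$-morphism between smooth $R$-schemes, so Lemma \ref{global change} yields
$$
\la\!\la \omega \ra\!\ra_m \;=\; \int_{U(K)} \la (f|_U)^*\omega \ra^{1/m}.
$$
Since $X \setminus U$ is a proper closed subscheme of $X$, its $K$-points form a nowhere dense $K$-analytic subset of $X(K)$, hence of measure zero by Theorem \ref{dens} and Remark \ref{measure 0 2}. Because $f^*\omega$ restricts to $(f|_U)^*\omega$ on $U$ by construction, this gives $\int_{X(K)} \la f^*\omega \ra^{1/m} = \int_{U(K)} \la (f|_U)^*\omega \ra^{1/m}$, so combining yields $\la\!\la f^*\omega \ra\!\ra_m = \la\!\la \omega \ra\!\ra_m$.

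\textbf{Main obstacle.} The delicate part is Step 2: verifying that the pullback truly extends across the indeterminacy locus. This rests on two ingredients — the codimension-$\geqslant 2$ estimate (via the valuative criterion, using $Y$ proper over $R$ and $X$ regular) and the Hartogs-type extension for a line bundle on a Noetherian normal scheme — both standard over a field but deserving a moment's check in the arithmetic relative setting. The hypotheses of smoothness and properness over $\mathrm{Spec}\,R$ are precisely what make them go through.
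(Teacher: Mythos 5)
Your proof is correct, and it takes a genuinely different route from the paper's. The paper first invokes Hironaka's desingularization theorem to produce a common resolution $W$ of $X$ and $Y$ projective over $R$, reduces to the case where $f$ is an $R$\emph{-morphism}, and then builds $f^*$ via a restriction diagram over an open $V\subseteq Y$ whose complement has codimension $\geqslant 2$ in $Y$. You instead work directly with the rational map $f$: you identify the indeterminacy locus in $X$, bound its codimension by the valuative criterion of properness applied at codimension-one points of the regular scheme $X$ (using that $Y$ is proper over $R$), pull back on the defined locus $U$, and extend across the complement by reflexivity of the line bundle $\omega_{X/R}^{\otimes m}$ on the normal scheme $X$. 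Both proofs close by the same mechanism, Lemma \ref{global change} together with the measure-zero statement (Remark \ref{measure 0 2}) for the nowhere dense locus removed. Your approach buys something real: it avoids any appeal to resolution of singularities over the mixed-characteristic base $\mathrm{Spec}\, R$, which is not available from \cite{hironaka} (a characteristic-zero result) and is in general an open problem in higher relative dimension; the paper's invocation of a projective common resolution over $R$ is therefore a weak spot that your argument simply does not touch. Conversely, the paper's route, when it applies, also yields finiteness of the pseudonorm via projectivity of $W$ in one stroke, though that is in any case immediate from compactness of $X(K)=X(R)$ and regularity of the measure $\la\omega\ra^{1/m}$, so nothing essential is lost.

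One small point worth making explicit in your Step 3: to apply Lemma \ref{global change} to $f|_U\colon U\to Y$, you implicitly use that $f|_U$ is $R$-rational in the sense of the paper's footnote, i.e.\ that $U$ meets every fibre over $\mathrm{Spec}\, R$. This does hold, since $X\setminus U$ has codimension $\geqslant 2$ in the irreducible scheme $X$ and hence cannot contain the special fibre (which has codimension $1$), but it deserves a sentence.
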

\begin{proof} By Hironaka's desingularization theorem \cite{hironaka} there exists a common resolution $W$ which is projective over $R$ and is compatible with $f$: 
$$
\xymatrix{&W\ar[ld]_g \ar[rd]^-h&\\
X\ar@{-->}[rr]_-f && Y.
} 
$$
Note that $\la\!\la\cdot\ra\!\ra_m$ takes finite values on $\Gamma(W,\omega_{W/R}^{\otimes m})$ since $W$ is projective over $R$. Thus $\la\!\la\cdot\ra\!\ra_m$ takes finite values on both $\Gamma(X,\omega_{X/R}^{\otimes m})$ and $\la\!\la\cdot\ra\!\ra_m$ takes finite values on $\Gamma(Y,\omega_{Y/R}^{\otimes m})$ by Lemma \ref{global change}. It suffices to establish the isometries for $g$ and $h$, i.e., we may assume $f$ an $R$-morphism. Therefore, there exists an open subset $V$ of $Y$ with $\mathrm{codim}_Y(Y\setminus V)\geqslant 2$ such that $f$ is an $R$-isomorphism from $f^{-1}(V)$ to $V$. We have 
a unique $R$-linear maps $f^*$ fitting in the following commutative diagram:
$$
\xymatrix@R40pt@C60pt{
\Gamma(Y,\omega_{Y/R}^{\otimes m})\ar[d]_-{\rm injective}^-{(\cdot)|_{V}}\ar@{.>}[r]^-{f^*}
&
\Gamma(X,\omega_{X/R}^{\otimes m})\ar[d]_-{\simeq}^-{(\cdot)|_{f^{-1}(V)}}\\
\Gamma(V,\omega_{Y/R}^{\otimes m})
\ar[r]^-{(f|_{f^{-1}(V)})^*}_-{\simeq}&
\Gamma\big(f^{-1}(V),\omega_{X/R}^{\otimes m}\big).
}
$$
By lemma \ref{global change} all the above maps preserves $\la\!\la\cdot\ra\!\ra_m$. It is direct to see that $(f^{-1})^*=(f^*)^{-1}$ ($f^{-1}$ being the inverse $R$-rational map of $f$).
\end{proof}

\section{\bf $p$-adic analogue of Rudin's equimeasurability theorem}
   
\begin{defi}\label{equim def}
Let $(X,\mathfrak{A},\mu)$ and $(Y,\mathfrak{B},\nu)$ be measured spaces, $(Z,\mathfrak{C})$ be a measurable space, and   
$$
\xymatrix{&Z&\\
X\ar[ru]^-F&&Y\ar[lu]_-G
}
$$
be measurable maps, i.e., $F^{-1}(C)\in\mathfrak{A}$ and $G^{-1}(C)\in\mathfrak{B}$ for every $C\in\mathfrak{C}$. We say that $f$ and $g$ are equimeasurable if $F_*\mu = G_*\nu$, i.e., $\mu\big(F^{-1}(C)\big)=\mu\big(G^{-1}(C)\big)$ for every $C\in\mathcal C$.
\end{defi}

The following is a $p$-adic analogue of a theorem due to Rudin (\cite{rudin} 1.4 Theorem I). 
\begin{thm}\label{equim} 
Suppose that $(K, |\cdot|_K)$ is a $p$-adic field and view $K^n$ as the measurable space equipped with the $\sigma$-algebra of all Borel subsets. Let $r\in \mathbf Q_{>0}$. For any measurable maps 
$$
\xymatrix{&K^n&\\
X\ar[ru]^-{F=(f_1,\dots,f_n)}&&Y\ar[lu]_-{G=(g_1,\dots,g_n)}
}
$$ 
from measured spaces $(X,\mathfrak{A},\mu)$ and $(Y,\mathfrak{B},\nu)$ to $K^n$, if 
$$
f_i\in \mathcal L^1(X,\mathfrak{A},\mu)
\,\text{ and }\, 
g_i\in \mathcal L^1(Y,\mathfrak{B},\nu)
\,\text{ for }\, 
i=1,\dots,n
$$ 
and
\begin{align}\label{p isometry}
\int_X \big|1+v_1f_1(x)+\cdots+v_1f_1(x)\big|_K^r d\mu 
=
\int_Y \big|1+v_1g_1(x)+\cdots+v_1g_1(x)\big|_K^r d\nu
\end{align}
for every $(v_1,\dots,v_n)\in K^n$, then $F$ and $G$ are equimeasurable.
\end{thm}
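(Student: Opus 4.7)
The plan is to convert the integral identity into the assertion that the Fourier transform of the signed measure $\lambda := F_*\mu - G_*\nu$ on $K^n$ vanishes. By hypothesis $\int_{K^n}|1+v\cdot w|_K^r\,d\lambda(w)=0$ for every $v\in K^n$, and the goal is $\lambda=0$. To reduce to $n=1$ I consider, for each $u\in K^n$, the pushforward $\sigma_u$ of $\lambda$ under the linear form $w\mapsto u\cdot w$; substituting $v=\tau u$ gives $\int_K|1+\tau s|_K^r\,d\sigma_u(s)=0$ for every $\tau\in K$. Fixing any nontrivial additive character $\psi$ on $K$ and writing $\widehat{\nu}(\xi):=\int\psi(\xi s)\,d\nu$, the identity $\widehat{\sigma_u}(\tau)=\widehat{\lambda}(\tau u)$ shows that if every such $\sigma_u$ were zero then $\widehat{\lambda}\equiv 0$, hence $\lambda=0$. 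The $L^1$-hypothesis on $F,G$ together with the ultrametric identity $|1+s|_K=|s|_K$ for $|s|_K>1$ ensures the integrability needed throughout.

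The heart of the matter is the one-dimensional statement: if $\sigma$ is a finite signed Borel measure on $K$ with $\int_K|1+\tau s|_K^r\,d\sigma(s)=0$ for every $\tau\in K$, then $\sigma=0$. Taking $\tau=0$ gives $\widehat{\sigma}(0)=\sigma(K)=0$. The key construction --- the ``non-identically $0$ function of a particular type'' alluded to in the introduction --- is as follows. For $\tau_1,\tau_2\in K^\times$ with $|\tau_1|_K\neq|\tau_2|_K$ set
\[
\Phi(s) := |\tau_2|_K^r\,|1+\tau_1 s|_K^r \;-\; |\tau_1|_K^r\,|1+\tau_2 s|_K^r.
\]
Using $|1+\tau s|_K^r=1$ when $|\tau s|_K<1$ and $|1+\tau s|_K^r=|\tau|_K^r|s|_K^r$ when $|\tau s|_K>1$, direct evaluation gives $\Phi(s)=0$ for $|s|_K>\max\{|\tau_1|_K^{-1},|\tau_2|_K^{-1}\}$ and $\Phi(s)=|\tau_2|_K^r-|\tau_1|_K^r\neq 0$ for $|s|_K<\min\{|\tau_1|_K^{-1},|\tau_2|_K^{-1}\}$. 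Hence $\Phi$ is a nonzero compactly supported locally constant (Schwartz--Bruhat) function lying in the $\mathbf C$-linear span $\mathcal V$ of $\{s\mapsto|1+\tau s|_K^r:\tau\in K\}$.

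Next, $\mathcal V$ is stable under nonzero dilations and --- up to a finite exceptional set of translation parameters, which is harmless by continuity of the relevant convolution --- under translations as well: the identity $|1+\tau(s-c)|_K^r=|1-\tau c|_K^r\cdot|1+\tau' s|_K^r$ with $\tau'=\tau/(1-\tau c)$ (whenever $1-\tau c\neq 0$) handles translations, while rescaling $\tau_i\mapsto a\tau_i$ in the definition of $\Phi$ handles the dilations. Consequently $(\Phi^{-}\!\ast\sigma_a)(ac)=\int_K\Phi(a(s-c))\,d\sigma(s)=0$ for every $a\in K^\times$ and $c\in K$, where $\sigma_a$ denotes the pushforward of $\sigma$ under $s\mapsto as$; equivalently $\Phi^{-}\!\ast\sigma_a\equiv 0$ on $K$. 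Taking Fourier transforms yields $\widehat{\Phi^{-}}(\xi)\cdot\widehat{\sigma}(a\xi)=0$ for every $a\in K^\times$ and $\xi\in K$. Since $\Phi\neq 0$, $\widehat{\Phi^{-}}$ is nonzero on an open set that necessarily contains some $\xi_0\neq 0$, and dilating $\xi_0$ by $a\in K^\times$ covers all of $K^\times$, so $\widehat{\sigma}\equiv 0$ on $K^\times$; combined with $\widehat{\sigma}(0)=0$ this gives $\widehat{\sigma}\equiv 0$, hence $\sigma=0$. The main obstacle I anticipate is the Fourier-analytic bookkeeping --- convergence of the convolutions and Parseval-type identities for our possibly non-$L^1$ objects --- but the locally constant structure of $|1+\tau s|_K^r$ afforded by the ultrametric inequality makes these verifications elementary and is precisely what enables the direct, Nullstellensatz-flavored argument above to bypass Wiener's invariant subspace theorem used in Rudin's archimedean original.
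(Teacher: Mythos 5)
Your proposal is correct and follows essentially the same route as the paper: both reduce via pushforward to a one-variable statement, both build a nonzero compactly supported function from a two-term linear combination of $|1+\tau s|_K^r$ with the leading $|s|^r$-terms cancelling (your $\Phi$ is exactly the paper's $f_0$ with $a_1=|\tau_2|_K^r$, $a_2=-|\tau_1|_K^r$, $c_i=\tau_i$), and both conclude by convolving against the signed measure, taking Fourier transforms, and using dilation invariance to sweep out $K^\times$. The paper packages the invariance bookkeeping in an abstract $\mathcal F^\perp$/$(\mathcal F^\perp)^\perp$ formalism while you track translations and dilations by hand, but the underlying argument is the same.
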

  Before giving the proof we first make some formal settings. Let $(S,\mathcal M)$ be a measurable space and $M(S,\mathfrak{M})$ the space of all complex measures on $(S,\mathfrak{M})$. For any family $\mathcal F$ of $\mathfrak{M}$-measurable functions on $S$ we let   
$$
\mathcal F^\perp:=
\left\{
m\in M(S,\mathfrak{M})
\,\left|\,
\begin{array}{c}
f\text{ is integrable with respect to }|m|\text{ and }\\
\int_S f(s)\,dm(s)=0\ \text{ for every }f\in \mathcal F
\end{array}
\right.
\right\}.
$$
On the other hand, for any family $\mathcal M\subseteq M(S,\mathfrak{M})$ we let
$$
\mathcal M^\perp:=
\left\{
f\text{ : }\mathfrak{M}\text{-measurable}\,\left|\,
\begin{array}{c}
f\text{ is integrable with respect to }|m|\text{ and }\\
\int_S f(s)\,dm(s)=0\ \text{ for every }m\in \mathcal M
\end{array}
\right.
\right\}.
$$
For any $\mathfrak{M}$-measurable map $\xymatrix{S\ar[r]^-T&S}$ we let
$$
T_*m:=m\circ T^{-1}\text{ for every }m\in B(S,\mathfrak{M})
$$ 
and
$$
T_*f:=f\circ T\text{ for every }\mathfrak{M}\text{-measurable function }f.
$$ 
Then 
$$
\int_S T^*f\, dm = \int_S f\, d(T_*m)
$$
if $f$ is integrable with respect to $|m|$. We have $T_*|m|=|T_*m|$ if $T$ is further assumed bijective. The following lemma is easily derived from the definitions.
\begin{lem}\label{T-inv} Let $\xymatrix{S\ar[r]^-{T}&S}$ be a bijective $\mathfrak{M}$-measurable map. For any family $\mathcal F$ of $\mathfrak{M}$-measurable functions on $S$ (resp.\ any $\mathcal M\subseteq M(S,\mathfrak{M})$), we have 
$$
T^*\mathcal F\subseteq \mathcal F
\, \Rightarrow\, 
T_*(\mathcal F^{\perp})\subseteq\mathcal F^{\perp}
\quad
\Big(\text{resp.\quad }
T_*\mathcal M\subseteq \mathcal M
\, \Rightarrow\, 
T^*(\mathcal M^{\perp})\subseteq\mathcal M^{\perp}
\Big).
$$
\end{lem}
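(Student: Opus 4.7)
The plan is to reduce the statement to the change-of-variable identity $\int_S T^*f\,dm=\int_S f\,d(T_*m)$ recorded immediately above the lemma, combined with the bijectivity-dependent identity $T_*|m|=|T_*m|$. Both parts of the assertion are formal: I would simply unfold the definitions of $\mathcal F^\perp$ and $\mathcal M^\perp$ and verify the integrability and vanishing conditions in turn.

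For the first implication, I would fix $m\in\mathcal F^\perp$ and an arbitrary $f\in\mathcal F$, and check that $T_*m\in\mathcal F^\perp$. Integrability of $f$ against $|T_*m|$ follows from
$$
\int_S|f|\,d|T_*m|=\int_S|f|\,d(T_*|m|)=\int_S|T^*f|\,d|m|,
$$
because $T^*f\in T^*\mathcal F\subseteq\mathcal F$ is integrable against $|m|$ by the defining property of $\mathcal F^\perp$. The same chain of equalities without absolute values then gives $\int_S f\,d(T_*m)=\int_S T^*f\,dm=0$, again using $T^*f\in\mathcal F$ and $m\in\mathcal F^\perp$. The second (resp.) implication is perfectly symmetric: given $f\in\mathcal M^\perp$ and $m\in\mathcal M$, the hypothesis $T_*\mathcal M\subseteq\mathcal M$ places $T_*m$ in $\mathcal M$, and running the change-of-variable identity in the reverse direction supplies both the integrability $\int_S|T^*f|\,d|m|=\int_S|f|\,d|T_*m|<\infty$ and the vanishing $\int_S T^*f\,dm=\int_S f\,d(T_*m)=0$.

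The whole lemma is really just a bookkeeping exercise, and the only point I would flag as a mild obstacle is the identification $T_*|m|=|T_*m|$, which is exactly where the assumption that $T$ is bijective (and not merely measurable) is used; without bijectivity one only has $|T_*m|\leqslant T_*|m|$ in general, which would still be enough to transfer integrability but not enough to make the pushforward operation on complex measures behave as required.
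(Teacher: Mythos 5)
Your argument is correct and is exactly the definition-chase the lemma calls for; the paper itself does not spell out a proof, stating only that the lemma ``is easily derived from the definitions,'' so there is nothing different to compare against. One small caveat to your closing remark: the one-sided inequality $|T_*m|\leqslant T_*|m|$ suffices to transfer integrability only in the first implication; in the second (resp.)\ implication the chain $\int_S|T^*f|\,d|m|=\int_S|f|\,d(T_*|m|)$ needs to be bounded by $\int_S|f|\,d|T_*m|$, which runs against the general inequality, so there the full equality $|T_*m|=T_*|m|$ (hence bijectivity) is already needed for the integrability step, not just for the vanishing.
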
 

\begin{proof}[{\bf Proof of Theorem \ref{equim}}]
We let $(S,\mathfrak{M})$ be $K$ with its $\sigma$-algebra of Borel subsets and 
$$
\mathcal F=
\Big\{\,
|at+b|_K^r\,\Big|\, a\in K^\times,\, b\in K
\Big\}.
$$ 
Let $(u_1,\dots, u_n)$ be the coordinate system of $K^n$. For every $v=(v_1,\dots, v_n)\in K^n$ we consider ``projection map'' $\xymatrix{K^n\ar[r]^-{L_v}&K}$ with $L_v(u_1,\dots,u_n)=v_1u_1+\dots+v_nu_n$. We set 
$$
m_v=(L_v)_*(F_*\mu-G_*\nu).
$$
Then (\ref{p isometry}) says nothing but
\begin{align}\label{p isometry 2}
m_v\in\mathcal F^{\perp}\, \text{ for every }\, v\in K^n. 
\end{align}
Now we set 
$$
\mathcal F':=(\mathcal F^\perp)^\perp\cap \mathcal L^1(K,ds)\quad (ds\text{ being the normalized Haar measure}).
$$
Since $\mathcal F$ is invariant under all affine transformations, so is $\mathcal F^\perp$ and $(\mathcal F^\perp)^\perp$ by Lemma \ref{T-inv}. Since $\mathcal L^1(K, ds)$ is also invariant under all affine transformations,\footnote{The invariance of $\mathcal L^1(K,ds)$ under dilations comes from the change-of-variable formula \ref{change}.} so is $\mathcal F'$ Therefore, for any $f\in\mathcal F'$ and any $m\in\mathcal F^\perp$, we have
$$
(f\ast m)(t) = \int_K f(t-s)\, dm(s) =0\, \text{ for every }\, t\in K.
$$
Applying the above argument to $m=m_v$ and taking the Fourier transform yield that
\begin{align}\label{fourier}
\widehat f\,\widehat m_v=0\, \text{ for every }\, f\in \mathcal F'\, \text{ and for every }\, v\in K^n
\end{align}
where we identify the character group $\widehat{K}$ with $K$ as topological groups. More precisely, let $\Lambda$ be any fixed nontrivial element of $\widehat K$, the pairing (cf. \cite{tate} Theorem 2.2.1)
$$
\xymatrix@R3pt{K\times K\ar[r]&\mathbf S^1\\
(\tau, t)\ar@{|->}[r]& \exp 2\pi i\Lambda(\tau t)
}
$$
gives an isomorphism $K\simeq \widehat K$.
\\
{\ul{\bf Claim}.} $\widehat m_v=0$ (and hence $m_v=0$) for every $v\in K^n$. 
\\
Assume the claim for now. By (\ref{p isometry 2}) we have
$$
m_v=0\, \text{ for every }\, v\in K^n. 
$$
This implies $\widehat{F_*\mu-G_*\nu}=0$ (and hence $F_*\mu-G_*\nu=0$) as follows. We have similar identifications $K^n\simeq \widehat{K}^n\simeq\widehat{ K^n}$ via the pairing)
$$
\xymatrix@R3pt{K^n\times K^n\ar[r]&\mathbf S^1\\
(v_\bullet, u_\bullet)\ar@{|->}[r]& \exp 2\pi i\Lambda\big(-\sum\nolimits_j v_j u_j\big)
}
$$
\begin{align*}
\widehat{F_*\mu-G_*\nu}(v_1,\dots,v_n)
&=
\int_{K^n}
\exp 2\pi i\Lambda \big(-\sum\nolimits_j v_j u_j\big)\, d(F_*\mu-G_*\nu)(u_\bullet) \\
&=
\int_K \exp 2\pi i\Lambda (-t)\, dm_v(t) =\widehat{m_v}(1) =0.
\end{align*}
It remains proving the claim. Note that by (\ref{p isometry}) with $(z_1,\dots,z_n)=(0,\dots,0)$ we have
\begin{align*}
\widehat m_v(0)&=\int_K dm_v = \int_{K^n} d(F_*\mu-G_*\nu)= \int_X d\mu  - \int_Y d\nu =0.
\end{align*}
By (\ref{fourier}), it suffices to show that for every $\tau\in K^\times$ there exists some $f\in\mathcal F'$ with $\widehat f(\tau)\neq 0$. Actually it will be enough to obtain a function $f_0\in\mathcal F'$ which is not identically zero: for such $f_0$ we have $\widehat{f_0}$ nontrivial, say $\widehat{f_0}(\tau_0)\neq 0$ for some $\tau_0\in K^\times$. Then \begin{align*}
0\neq \widehat{f_0}(\tau_0) 
& = \int_K \exp 2\pi i\Lambda (-\tau_0 s) f_0(s)\, ds
\\
&= \int_K \exp 2\pi i\Lambda (-\tau s) f_0(\tau\tau_0^{-1} s)\,|\tau\tau_0^{-1}|_K\, ds
= \widehat f (\tau)
\end{align*}  
where $f(s):=f_0(\tau\tau_0^{-1}s)\,|\tau\tau_0^{-1}|_K$. Since $\mathcal F'$ is invariant under dilation, we see that $f\in\mathcal F'$. To construct such a nontrivial $f_0\in \mathcal F'$, we consider functions of the form
\begin{align}\label{f0}
f_0(s)=\sum\limits_{k=1}^N a_k |1+ c_k s|_K^r
\end{align}  
where $a_1,\dots,a_N\in \mathbf R$ and $c_1,\dots,c_N\in K^\times$ are to be chosen. We first try to analyse the behaviour of $f_0(s)$ when $|s|_K$ is sufficiently large. Let $b_j={r\choose j}\in\mathbf Q$ be the binomial coefficients. Then
$$
(1+\sigma)^r :=\sum\limits_{j=1}^\infty b_j\sigma^j\, \text{ converges in }K\text{ when }\, |\sigma|_K<1
$$ 
since $|b_j|_K\leqslant 1$ for all $j\in\mathbf N$. Moreover, we have
$$
|(1+\sigma)^r|_K=|1+\sigma|_K^r\, \text{ when }\, |\sigma|_K<1.
$$
This can be see as follows. Write $r=\frac{l}{m}$ with $l,m\in\mathbf N$. When $|\sigma|_K<1$, by multiplying power series we see that $\big((1+\sigma)^r\big)^m = (1+\sigma)^l$, and hence 
$$ 
\big|(1+\sigma)^r\big|_K=\big|(1+\sigma)^r\big|_K^{m\frac{1}{m}}=\big|\big((1+\sigma)^r\big)^m\big|_K^{\frac{1}{m}} = \big|(1+\sigma)^l\big|_K^{\frac{1}{m}}=\big|1+\sigma\big|_K^{\frac{l}{m}}=\big|1+\sigma\big|_K^r.
$$
Therefore,
$$
|1+s|^r_K=|s|^r_K\left|\Big(1+\frac{1}{s}\Big)^r\right|_K\, \text{ when }\,|s|_K>1.
$$
From now on we denote $|\cdot|_K$ by $|\cdot|$. When $|s|>\max\{1,|c_1|,\dots,|c_N|\}$, we have
\begin{align*}f_0(s)&=\sum\limits_{k=1}^N \left(a_k\, |c_k|^r\, |s|^r \, \left|\sum\limits_{j=1}^\infty b_j\,c_k^{-j} s^{-j}\right|\right)
\\
&=\sum\limits_{k=1}^N a_k\, |c_k|^r\, |s|^r \, 
\max
\left\{
|b_j|\,
|c_k|^{-j} \,
|s|^{-j}
\,\Big|\, j\in\mathbf N\right\}.
\end{align*}
Note that 
$$
1=|b_0|\, |c_k|^0\,|s|^0 > |b_j|\, |c_k|^{-j}\,|s|^{-j}
\, \text{ if and only if }\,  
|s|_K> \frac{|b_j|^{\frac{1}{j}}_K}{|c_k|}.
$$
In summary, 
$$
f_0(s)=\left(\sum\limits_{k=1}^N a_k\, |c_k|^r\right) |s|^r
$$
when 
$$
|s|>\max
\left\{
1,|c_1|,\dots,|c_N|,\, 
\max\limits_{j\in\mathbf N}\,|b_j|^{\frac{1}{j}}\,\max\{\,|c_k|^{-1}\,|\,k=1,\dots, N\}
\right\}.
$$
If $a_k$ and $c_k\, (k=1,\dots,N)$ are chosen so that 
\begin{align}\label{condition 1}
\sum\limits_{k=1}^N a_k\, |c_k|^r =0,
\end{align}
then $f_0$ vanishes outside a sufficiently big ball, and hence is bounded and continuous. In particular, $f_0\in \mathcal L^1(K,ds)$. Besides, $f_0\in\mathcal F\subseteq (\mathcal F^\perp)^\perp$, and hence $f_0\in\mathcal F'$. It remains to make $f_0$ nontrivial. Since $f_0(0)=\sum\limits_{k=1}^N a_k$, it suffices to set $N=2$ and choose $c_1, c_2\in K^\times$ with $|c_1|_K\neq |c_2|_K$ and $a_1,a_2$ satisfying (\ref{condition 1}). 
\end{proof}   

\section{\bf Proof of Theorem \ref{image}}\label{proof of image}

Recall that the $R$-linear isometry  
$$
\xymatrix{\big(V_Y, \la\!\la\cdot\ra\!\ra_m\big)\ar[r]^-{T}&\big(V_X, \la\!\la\cdot\ra\!\ra_m\big)}
$$ 
induces an $R$-isomorphism
$$
\gamma\,:=\,\mathbf PT^*:\xymatrix{\mathbf P V_X^*\ar[r]^-\simeq& \mathbf P V_Y^*}.
$$
Let $X'$ denote the image closure of $\varphi_{|V_X|}$, which is defined over $R$. $X'$ is an integral schemes of finite type over $R$. Similar setting will be adopted for $Y$. In order to show that $\mathbf PT^*$ identifies $X'$ with $Y'$, it suffices to prove the statement over $\overline{K}$-points: \\
\ \\
{\ul{\bf Claim 1}.} If $\gamma\big(X'(\overline{K})\big)=Y'(\overline{K})$, then $X'$ is identified with $Y'$ via $\gamma$.

$\gamma(X')$ and $Y'$ are integral closed $R$-subschemes of the projective scheme $\mathbf P V_Y^*\simeq \mathbf P^N_R$. Suppose that $\gamma(X')$ and $Y'$ are locally defined on an affine open set $U\simeq\mathrm{Spec}\, A$ of $\mathbf P^N_R$  by ideals $I$ and $J\unlhd A$. Since $\overline{K}$ is algebraically closed and $\gamma\big(X'(\overline{K})\big)=Y'(\overline{K})$, we have $I\otimes_R \overline{K}=J\otimes_R \overline{K}$. Note that
$$
I= A\cap (I\otimes_R\overline{K})\quad  (\text{all viewed as subsets of } A\otimes_R\overline{K})
$$
as can be seen by the following commutative diagram with exact rows:
$$
\xymatrix{
0\ar[r] & I\otimes_R\overline{K}
\ar[r] & A\otimes_R\overline{K}\ar[r] & (A/I)\otimes_R\overline{K}\ar[r] & 0\\
0\ar[r] & I\ar@{^{(}->}[u]\ar[r] & A\ar@{^{(}->}[u]\ar[r] & A/I\ar@{^{(}->}[u]\ar[r] & 0
}.
$$ 
Similarly, $J= A\cap (J\otimes_R\overline{K}) =A\cap (I\otimes_R\overline{K})=I$.\\
\ \\
{\ul{\bf Claim 2}.} $Y'(\overline{K})\subseteq \gamma\big(X'(\overline{K})\big)$.

Fix an $R$-basis of $\eta_0,\dots,\eta_N\in \Gamma(Y,\omega^{\otimes m}_{Y/R})$. Then $(T\eta_0)_K,\dots, (T\eta_N)_K$ form a basis of $V_{X,K}$. The rational maps $\gamma\circ\varphi_{|V_X|}$ and $\varphi_{|V_Y|}$ can be viewed as mapping some nonempty open sets $X_0\subseteq X$ resp.\, $Y_0\subseteq Y$, respectively, to the affine $R$-scheme $\mathbf A^N_R$, given by 
$$
\left(\frac{T\eta_1}{T\eta_0},\dots, \frac{T\eta_N}{T\eta_0}\right) 
\,\text{ and }\,
\left(\frac{\eta_1}{\eta_0},\dots, \frac{\eta_N}{\eta_0}\right),
$$
which determine maps
$$
\xymatrix@C70pt{&\overline{K}^N&\\
X_0(\overline{K})\ar[ru]^-{F_{\overline{K}}}&&Y_0(\overline{K})\ar[lu]_-{G_{\overline{K}}}\\
&K^N\ar@{^{(}->}[uu]&\\
X_0(K)\ar@{^{(}->}[uu]\ar[ru]^-{F=(f_1,\dots,f_N)\ }&&Y_0(K)\ar@{^{(}->}[uu]\ar[lu]_-{\ G=(g_1,\dots,g_N)}}.
$$ 
We let $\mu$ and $\nu$ be the measures on $X$ and on $Y$ associated to $\la T\eta_0\ra^{\frac{1}{m}}$ and $\la\eta_0\ra^{\frac{1}{m}}$, respectively. Then 
for every $(v_1, v_2,\dots, v_N)\in K^N$ we have
$$
\Big\la T\Big(\eta_0+\sum\nolimits_{i=1}^N v_i\eta_i\Big)\Big\ra^{\frac{1}{m}}
=
\Big|1+\sum\nolimits_{j=1}^N v_i f_i\Big|_K^{\frac{1}{m}} \, \la T\eta_0\ra^{\frac{1}{m}}
$$
and
$$ 
\Big\la \eta_0+\sum\nolimits_{i=1}^N v_i \eta_i\Big\ra^{\frac{1}{m}}
=
\Big|1+\sum\nolimits_{i=1}^N v_i g_i\Big|_K^{\frac{1}{m}} \, \la \eta_0\ra^{\frac{1}{m}}.
$$
Since $X$ and $Y$ are proper over $R$, we have
$$ 
\int_{X_0(K)}\la T(\eta_i)\ra^{\frac{1}{m}}
=\int_{X(K)}\la T(\eta_i)\ra^{\frac{1}{m}}<\infty
\,\text{ and }\,
\int_{Y_0(K)}\la \eta_i\ra^{\frac{1}{m}}
=\int_{Y(K)}\la \eta_i\ra^{\frac{1}{m}}<\infty,
$$
or equivalently, 
$$
f_i\in \mathcal L^1\big(X_0(K),\mathfrak{B}_{X_0},\mu\big)
\,\text{ and }\, 
g_i\in \mathcal L^1\big(Y_0(K),\mathfrak{B}_{Y_0},\nu\big)
\,\text{ for }\, 
i=1,\dots,N,
$$ 
where $\mathfrak{B}_{X_0}$ and $\mathfrak{B}_{Y_0}$ are the corresponding $\sigma$-algebra of Borel sets. The isometry condition on
$
\xymatrix{\big(V_Y, \la\!\la\cdot\ra\!\ra_m\big)\ar[r]^-{T}&\big(V_X, \la\!\la\cdot\ra\!\ra_m\big)}
$
is then equivalent to that
$$
\int_{X_0(K)} \Big|1+\sum\nolimits_{j=1}^N v_i f_i\Big|_K^{\frac{1}{m}} \, \la T\eta_0\ra^{\frac{1}{m}}
=
\int_{Y_0(K)}
\Big|1+\sum\nolimits_{i=1}^N v_i g_i\Big|_K^{\frac{1}{m}} \, \la \eta_0\ra^{\frac{1}{m}}
$$
for every $(v_1, v_2,\dots, v_N)\in K^N$. By Theorem \ref{equim} we conclude that $F$ and $G$ are equimeasurable (Definition \ref{equim def}). Now suppose that $Y'(\overline{K})\nsubseteq\gamma\big(X'(\overline{K})\big)$. We will show that the equimeasurability of $F$ and $G$ is violated. By Chevalley' theorem, there exists a nonempty Zariski open subset $U$ of $Y'\setminus \gamma(X')$ such that 
$$
\varphi_{|V_Y|}^{-1}(U)\subseteq Y_0 
\, \text{ and }\,
U=\varphi_{|V_Y|}\big(\varphi_{|V_Y|}^{-1}(U)\big)=G\big(\varphi_{|V_Y|}^{-1}(U)\big).
$$ 
We denote $\varphi_{|V_Y|}^{-1}(U)$ by $W$. Since $Y$ is smoorh and $Y(K)$ is assumed nonempty, Theorem \ref{dens} implies that $W(K)\neq\emptyset$, and hence it contains a nonempty open subset of the $K$-analytic manifold $Y(K)$. On the other hand, $F^{-1}(U)=\emptyset$ since $U\cap \gamma(X')=\emptyset$. Therefore
$$
0=\mu\big(F^{-1}\big(U(K)\big)\big)=\nu\big(G^{-1}\big(U(K)\big)\big)=\int_{W(K)}\la\eta_0\ra^{\frac{1}{m}}>0,
$$ 
a contradiction.

\section{\bf Existence of rational points over finite fields}

As indicated in the introduction, the nontriviality of $X(K)$ comes from that of $X(\mathbf F_q)$.

In this section we prove Theorem \ref{nontrivial}. First recall the Hasse--Weil bound when $W$ is a smooth geometrically connected complete curve of genus $g$: 
$$
\left|1+q - \# W\big(\mathbf F_{q})\big)\right| \leqslant 2g\sqrt{q}
$$ 
(see for example \cite{hartshorne} Exercise V 1.10). This implies that$W(\mathbf F_{q})\neq\emptyset$ if $q>4g^2$. 

\begin{thmrp} [{{\bf Theorem \ref{nontrivial}}}]
Let $W$ be a complete smooth geometrically connected scheme over $\mathbf F_q$ of dimension $n$. Suppose that $H$ is a very ample divisor. Then 
$$
W(\mathbf F_q)\neq\emptyset\ \text{ if }\ q>\max\left\{H^{\bullet n}(H^{\bullet n}-1)^n,\, 
\big(K_W\bullet H^{\bullet (n-1)} + (n-1) H^{\bullet n}+2\big)^2
\right\}.
$$
\end{thmrp} 
\begin{proof} 
In the following we let $\overline{(\cdot)}$ denote the operation of base-change induced by a fixed extension $\mathbf F_q\rightarrow\overline{\mathbf F}_q$, and objects without an overline are defined over $\mathbf F_q$. We may assume that $n\geqslant 2$. The very ample divisor $H$ gives an embedding of $W$ into a projective space $\mathbf P^N_{\mathbf F_q}$. We let $d=H^{\bullet n}$ be the degree of $W$ in $\mathbf P^N_{\mathbf F_q}$. Since $q>d(d-1)^n$, there exists a hyperplane $H_1$ in $\mathbf P^N_{\mathbf F_q}$ such that $\overline{H}_1$ intersects $\overline{W}$ transversally (\cite{ballico} Theorem 1). Since $n=\dim\, \overline{W}\geqslant 2$, we see that $\overline{W}\cap \overline{H}_1$ is connected (\cite{hartshorne} Corollary 7.9). We let $W_1=W\cap H_1$. Then $W_1$ has degree $d$ in $\mathbf P^N_{\mathbf F_q}$, too, and $q>d(d-1)^n\geqslant d(d-1)^{n-1}$. We may repeat the argument to find hyperplane $H_1, \dots, H_{n-1}$ in $\mathbf P^N_{\mathbf F_q}$ such that $\overline{W}_j=\overline{W}_{j-1}\cap \overline{H}_j$ is smooth and geometrically connected. In particular, $W_{n-1}$ is a complete smooth geometrically connected curve over $\mathbf F_q$. By the adjunction formula, 
\begin{align*}
2g(W_{n-1})-2
&= (K_W+H_1+\cdots+H_{n-1})\bullet H_1\bullet\cdots\bullet H_{n-1}\\
&=K_W\bullet H^{\bullet (n-1)} + (n-1) H^{\bullet n}.
\end{align*}
By the Hasse--Weil bound, we see that $W(\mathbf F_q)\neq \emptyset$ since
\begin{align*}
q> \Big(K_W\bullet H^{\bullet (n-1)} + (n-1) H^{\bullet n}+2\Big)^2.
\end{align*}
This completes the proof.
\end{proof}

\end{document}